\DeclareMathOperator{\with}{\&}
\DeclareMathOperator{\ldd}{/}
\DeclareMathOperator{\rdd}{\backslash}
\theoremstyle{plain}
  \newtheorem{thm}{Theorem}[section]
  \newtheorem{lem}[thm]{Lemma}
  \newtheorem{prop}[thm]{Proposition}
\theoremstyle{definition}
  \newtheorem{defn}[thm]{Definition}
  \newtheorem{exmp}[thm]{Example}
  \newtheorem{rem}[thm]{Remark}
\newtheorem*{SA}{Standing Assumption}
\newcommand{\ra}{\rightarrow}
\newcommand{\lam}{\lambda}
\newcommand{\sub}{{\rm sub}}
\newcommand{\id}{{\rm id}}
\newcommand{\sfe}{{\sf e}}
\newcommand{\sfm}{{\sf m}}
\newcommand{\sQ}{{\sf Q}}
\newcommand{\setQ}{{\sf Set}(\sQ)}
\newcommand{\Set}{\sf Set}
\newcommand{\bv}{\bigvee}
\newcommand{\bw}{\bigwedge}
\newcommand{\FQ}{\sQ\text{-}{\sf Fil}}
\begin{document}
\title{The double contravariant powerset monad in the Goguen category of fuzzy sets}

\author{Sijia Lu, Dexue Zhang  \\ {\small School of Mathematics, Sichuan University, Chengdu, China}\\  {\small sijialu1027@qq.com, dxzhang@scu.edu.cn}  }
\date{}
\maketitle

\begin{abstract}A monad is constructed in the Goguen category of fuzzy sets valued in a unital quantale, which is an analog  of the double contravariant powerset monad in the category of sets. With help of this monad it is proved that the Goguen category of fuzzy sets is dually monadic over itself.
\vskip 1pt

\noindent  {\it Keywords:}  Fuzzy set, Quantale, Monad
\vskip 1pt
\noindent  {\it MSC(2020):}  03E72, 18C15, 18C20
\end{abstract}

\section{Introduction}
In order to construct a foundation for fuzzy set theory, Goguen \cite{Goguen69b,Goguen74} introduced a category ${\sf Set}(L)$, now called the Goguen category of $L$-fuzzy sets (or  $L$-sets),  where $L$ is a complete lattice, often endowed with some extra structures. Goguen has obtained a characterization of such categories by a relatively simple system of axioms.

Let $L$ be a complete lattice.   Then an $L$-set is a pair $(X,\alpha)$, where $X$ is a set and $\alpha\colon X\to L$ is a function. The set $X$ is the carrier of the $L$-set, the complete lattice  $L$ is the truth value set, and the value $\alpha(x)$ is the membership degree of the point $x$ in the $L$-set.  A morphism from $(X,\alpha)$ to $(Y,\beta)$ is a function $f\colon X\to Y$ such that $\alpha\leq \beta\circ f$, in which case we say that $f\colon(X,\alpha)\to(Y,\beta)$ is a Goguen map or satisfies the Goguen condition. $L$-sets and their morphisms constitute a category ${\sf Set}(L)$ --- the Goguen category of $L$-sets.
The  category ${\sf Set}(L)$ provides a nice framework for the study and application of fuzzy sets, it enjoys many pleasant categorical properties, particularly when  $L$ possesses rich structures, as demonstrated in Goguen \cite{Goguen69,Goguen74}, Pultr \cite{Pultr76a,Pultr76b}, H\"{o}hle and Stout \cite{HS1991}, and  Stout \cite{Stout92}.

The \emph{double contravariant powerset monad} in the category of sets (see e.g. \cite[Example 2.11]{Manes2003}) and its submonads, the filter monad and the ultrafilter monad in particular, are among the fundamental constructions of sets and play  important roles  in category theory, algebra, topology, and other disciplines, see e.g. \cite{Monoidal top,Manes76,Manes2003,Manes2010}. Since the Goguen category ${\sf Set}(L)$ is not a topos unless $L$ is a singleton set, its behavior is quite different from the category  of sets when ``powerobjects'' are concerned \cite{Stout92}. It is natural to ask, though not a topos, whether there exist analogous constructions in the category ${\sf Set}(L)$.
In this paper, in the circumstance that the truth value set is a unital quantale $\sQ$,  a monad $(\mathfrak{P},\mu,\eta)$ is constructed in the category $\setQ$ of ($\sQ$-valued) fuzzy sets. This monad is an analog of the double contravariant powerset monad of sets, and it is a lifting of the double contravariant $\sQ$-powerset monad   in \cite[Remark 1.2.7]{Hoehle2001}. Some basic properties of the monad $(\mathfrak{P},\mu,\eta)$ are examined. The main results include: (i) The category of the Eilenberg-Moore algebras of $(\mathfrak{P},\mu,\eta)$ is equivalent to the opposite category of $\setQ$, hence the Goguen category of fuzzy sets is dually monadic over itself,   adding another one to the list of pleasant properties of the Goguen category.   (ii) For a commutative quantale, the (covariant) powerset monad in $\setQ$ constructed in \cite{EKS12} is a submonad of the double contravariant $\sQ$-powerset monad.

\section{Preliminaries}
For category theory we refer to   Mac{\thinspace}Lane \cite{MacLane1998} or  Riehl \cite{Riehl}; for monads in the category of sets we refer to Manes \cite{Manes2003}; for quantale theory we refer to  Rosenthal \cite{Rosenthal1990}. In this preliminary section, we just recall some basic ideas about quantales and monads,  the aim is to fix notations.

\subsection*{Quantales}
A \emph{unital quantale} \cite{Rosenthal1990} (also known as a \emph{complete residuated lattice}  \cite[page 178]{Galatos2007})
\[\sQ=(\sQ,\with,k)\]
is a monoid with $k$ being the unit, such that the underlying set $\sQ$ is a complete lattice (with a top element $1$ and a bottom element $0$) and the multiplication $\with$ distributes over arbitrary suprema, i.e.,
\[p\with\Big(\bv_{i\in I}q_i\Big)=\bv_{i\in I}p\with q_i\quad\text{and}\quad\Big(\bv_{i\in I}p_i\Big)\with q=\bv_{i\in I}p_i\with q\]
for all $p,q,p_i,q_i\in\sQ$ $(i\in I)$.

Quantales abound in mathematics; numerous examples are presented in \cite{EGHK,Rosenthal1990}. As advocated in Goguen \cite{Goguen67,Goguen69,Goguen74} (where quantales are called complete lattice ordered semigroup), quantales are natural candidates for truth-value tables in the theory of fuzzy sets.

For each $q\in \sQ$, the map $-\with q\colon\sQ\to \sQ$ has a right adjoint \[-\ldd q\colon \sQ\to\sQ, \quad r\ldd q=\bv\{p\in\sQ\mid p\with q\leq r\},\] called the \emph{left  implication} of $\with$.

For each $p\in \sQ$, the map $p\with-\colon\sQ\to \sQ$ has a right adjoint \[p\rdd -\colon \sQ\to\sQ, \quad p\rdd r=\bv\{q\in\sQ\mid p\with q\leq r\},\] called the \emph{right  implication} of $\with$.

The left and the right implications satisfy that
\[p\leq r\ldd q\iff p\with q\leq r\iff   q\leq p\rdd r\]
for all $p,q,r\in\sQ$.

A quantale $\sQ$ is \emph{commutative}  if $p\with q=q\with p$ for all $p,q\in\sQ$, in which case we write     \[p\ra q:=q\ldd p=p\rdd q\]     for all $p,q\in\sQ$.

Some basic properties of the left and the right implications are listed below.

\begin{prop}{\rm(\cite{Galatos2007,Rosenthal1990})} \label{prop of impli} Let $(\sQ,\with,k)$ be a unital quantale. \begin{enumerate}[label={\rm(\roman*)}] \setlength{\itemsep}{0pt}
\item $k\leq y\ldd x\iff x\leq y\iff k\leq x\rdd y$.
\item $x\ldd k=x=k\rdd x$.

\item $(y\rdd z)\ldd x=y\rdd (z\ldd x)$.
\item $(y\ldd  x)\with x\leq y, \ x\with (x\rdd  y)\leq y$.
\item $(z\ldd y)\ldd x=z\ldd (x\with y)$,
$x\rdd (y\rdd z)=(y\with x)\rdd z$.
\item $(z\ldd y)\with (y\ldd x)\leq z\ldd x$, $(x\rdd y)\with (y\rdd z)\leq x\rdd z$.
\end{enumerate}\end{prop}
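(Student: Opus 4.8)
The plan is to derive every item from the single adjointness property $p\leq r\ldd q\iff p\with q\leq r\iff q\leq p\rdd r$ recalled above, together with associativity of $\with$, the unit law $k\with p=p=p\with k$, and the elementary lattice fact that $a=b$ as soon as $p\leq a\iff p\leq b$ holds for every $p\in\sQ$. I would treat the $\ldd$-statement and the $\rdd$-statement in each item as mirror images: once one is checked, the other follows by the symmetric computation, so only the $\ldd$ side needs to be written out in detail.

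For (i) and (ii) I would substitute $k$ into the adjointness and use the unit law: $k\leq y\ldd x\iff k\with x\leq y\iff x\leq y$, and for (ii) observe that $p\leq x\ldd k\iff p\with k\leq x\iff p\leq x$ for all $p$, whence $x\ldd k=x$ by the uniqueness principle. For (iii) and (v) I would chain the adjunctions and push associativity through; for every $p\in\sQ$,
\[
p\leq(y\rdd z)\ldd x\iff p\with x\leq y\rdd z\iff y\with(p\with x)\leq z\iff(y\with p)\with x\leq z\iff p\leq y\rdd(z\ldd x),
\]
and likewise $p\leq(z\ldd y)\ldd x\iff(p\with x)\with y\leq z\iff p\with(x\with y)\leq z\iff p\leq z\ldd(x\with y)$, after which the uniqueness principle yields the stated equalities.

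Item (iv) is immediate: apply the adjointness to $y\ldd x\leq y\ldd x$ (resp.\ to $x\rdd y\leq x\rdd y$). For (vi) I would verify $\big((z\ldd y)\with(y\ldd x)\big)\with x\leq z$: by associativity this equals $(z\ldd y)\with\big((y\ldd x)\with x\big)$, which by (iv) and monotonicity of $\with$ is $\leq(z\ldd y)\with y\leq z$, again by (iv); the adjointness then gives $(z\ldd y)\with(y\ldd x)\leq z\ldd x$, and the right-implication inequality is the mirror computation.

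There is no genuine obstacle — the proposition is entirely formal, and indeed it is standard (hence the citations). The only thing requiring a little care is the bookkeeping: $\ldd$ multiplies its ``erased'' argument on the right while $\rdd$ multiplies it on the left, so one must keep the two sides of each identity from being transposed; this is precisely why associativity (and not commutativity, which is not assumed here) is what makes the computations in (iii), (v) and (vi) go through.
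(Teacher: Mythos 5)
Your proof is correct: every item does follow from the adjointness $p\leq r\ldd q\iff p\with q\leq r\iff q\leq p\rdd r$ together with associativity, the unit law, and the uniqueness principle, and your computations for (i)--(vi) all check out. The paper itself gives no proof for this proposition (it only cites \cite{Galatos2007,Rosenthal1990}), and your argument is the standard one found in those references, so there is nothing further to compare.
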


\begin{SA}Throughout this paper, $\sQ=(\sQ,\with,k)$ always denotes a unital quantale with at least two elements, unless otherwise specified.\end{SA}


For each $r\in\sQ$ and each element $x$ of a set $X$, we write $r_x$ for the element of $\sQ^X$   given by $$r_x(y)=\begin{cases}r & y=x,\\ 0 & y\not=x.\end{cases}$$

Let $X$ be  a set. For all $\lambda,\gamma\in\sQ^X$, we write $\lambda\swarrow\gamma$ and $\gamma\searrow\lambda$ for elements of $\sQ$ given  by \[\lambda\swarrow\gamma =\bw_{x\in X}\lambda(x)\ldd\gamma(x)\] and \[ \gamma\searrow\lambda=\bw_{x\in X}\gamma(x)\rdd\lambda(x).\]

It is easily verified that \begin{itemize} \item $(\bw_i\lambda_i)\swarrow\gamma =\bw_i( \lambda_i \swarrow\gamma)$; ~ $ \lambda\swarrow(\bv_i\gamma_i) =\bw_i( \lambda \swarrow\gamma_i)$. \item $\gamma\searrow (\bw_i\lambda_i)= \bw_i(\gamma\searrow  \lambda_i)$; ~ $(\bv_i\gamma_i)\searrow\lambda =\bw_i(\gamma_i \searrow\lambda)$.\end{itemize}

Let $X,Y$ be sets and $f\colon X\to Y$ be a map. For each $\gamma\in\sQ^X$,  we write $f(\gamma)$ for the image  of $\gamma$; that is, \[f(\gamma)(y)=\bv\{\gamma(x)\mid f(x)=y\} \] for all $y\in Y$. For each $\lambda\in\sQ^Y$, we write $f^{-1}(\lambda)$ for the inverse image  of $\lambda$; that is,  $f^{-1}(\lambda)=\lambda\circ f$.

\begin{lem} \label{image vs preimage} Let $f\colon X\to Y$ be a map. Then for all $\alpha,\gamma\in\sQ^X$ and $\beta\in\sQ^Y$, \begin{enumerate}[label={\rm(\roman*)}] \setlength{\itemsep}{0pt} \item $\alpha\searrow\gamma\leq f(\alpha)\searrow f(\gamma)$,~ $\gamma\swarrow\alpha\leq f(\gamma)\swarrow f(\alpha)$; \item  $f(\alpha)\searrow\beta =\alpha\searrow f^{-1}(\beta)$,~ $\beta\swarrow f(\alpha)= f^{-1}(\beta)\swarrow\alpha$; \item  $f(\alpha)\leq\beta\iff \alpha\leq\beta\circ f$. \end{enumerate} \end{lem}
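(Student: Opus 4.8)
The plan is to prove the three items in the order (iii), (ii), (i), since each leans on the ones before it. Item (iii) is the familiar image--preimage adjunction and uses only the lattice order: by the defining formula for $f(\alpha)$, the inequality $f(\alpha)\leq\beta$ says $\bv\{\alpha(x)\mid f(x)=y\}\leq\beta(y)$ for every $y\in Y$, which by the universal property of suprema is equivalent to $\alpha(x)\leq\beta(f(x))$ for every $x\in X$, that is, to $\alpha\leq\beta\circ f$.

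For item (ii) I would compute straight from the definitions:
\[f(\alpha)\searrow\beta=\bw_{y\in Y}f(\alpha)(y)\rdd\beta(y)=\bw_{y\in Y}\Big(\bv\{\alpha(x)\mid f(x)=y\}\Big)\rdd\beta(y).\]
Since $\with$ distributes over suprema one has $\big(\bv_i p_i\big)\rdd r=\bw_i(p_i\rdd r)$ for all $p_i,r\in\sQ$, so the inner term equals $\bw\{\alpha(x)\rdd\beta(y)\mid f(x)=y\}$. Re-indexing the resulting iterated infimum --- first over $y\in Y$, then over the fibre $f^{-1}(y)$ --- as a single infimum over $X$, in which the point $x$ contributes the term $\alpha(x)\rdd\beta(f(x))$, gives
\[f(\alpha)\searrow\beta=\bw_{x\in X}\alpha(x)\rdd\beta(f(x))=\alpha\searrow(\beta\circ f)=\alpha\searrow f^{-1}(\beta).\]
The identity $\beta\swarrow f(\alpha)=f^{-1}(\beta)\swarrow\alpha$ is obtained in exactly the same way, this time using the dual law $r\ldd\big(\bv_i q_i\big)=\bw_i(r\ldd q_i)$ (the pointwise form of $\lambda\swarrow(\bv_i\gamma_i)=\bw_i(\lambda\swarrow\gamma_i)$).

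Item (i) I would then deduce from (ii) rather than proving it afresh. Taking $\beta=f(\gamma)$ in (iii), the tautology $f(\gamma)\leq f(\gamma)$ yields the unit inequality $\gamma\leq f^{-1}(f(\gamma))$. Since $\gamma\searrow(\bw_i\lambda_i)=\bw_i(\gamma\searrow\lambda_i)$ makes $\searrow$ monotone in its second variable and $(\bw_i\lambda_i)\swarrow\gamma=\bw_i(\lambda_i\swarrow\gamma)$ makes $\swarrow$ monotone in its first variable, this gives $\alpha\searrow\gamma\leq\alpha\searrow f^{-1}(f(\gamma))$ and $\gamma\swarrow\alpha\leq f^{-1}(f(\gamma))\swarrow\alpha$, and by item (ii) (with $\beta=f(\gamma)$) the right-hand sides are precisely $f(\alpha)\searrow f(\gamma)$ and $f(\gamma)\swarrow f(\alpha)$. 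Alternatively one can verify (i) head-on: by the adjunction, $\alpha\searrow\gamma\leq f(\alpha)(y)\rdd f(\gamma)(y)$ is equivalent to $f(\alpha)(y)\with(\alpha\searrow\gamma)\leq f(\gamma)(y)$, and distributing $\with$ over the supremum defining $f(\alpha)(y)$ reduces this to $\alpha(x)\with\big(\alpha(x)\rdd\gamma(x)\big)\leq\gamma(x)$ for each $x\in f^{-1}(y)$, which is Proposition~\ref{prop of impli}(iv); and symmetrically for $\swarrow$.

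I do not expect a genuine obstacle. The one spot demanding care is the re-indexing of iterated infima in (ii): it is legitimate precisely because the fibres $\{f^{-1}(y)\}_{y\in Y}$ form a partition of $X$. The other thing to keep straight --- since $\with$ is not assumed commutative --- is the left/right bookkeeping: $\searrow$ must always be paired with $\rdd$ and $\swarrow$ with $\ldd$, and in the direct proof of (i) the adjunction unit $\alpha(x)$ has to sit on the correct side of $\with$ so that Proposition~\ref{prop of impli}(iv) applies verbatim.
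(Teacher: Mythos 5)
Your proof is correct. The paper states Lemma~\ref{image vs preimage} without proof, treating it as a routine verification, so there is no argument to compare against; what you give is exactly the expected computation. In particular, the key steps all check out: the re-indexing of the iterated infimum over fibres in (ii) is valid (empty fibres contribute the top element and are harmless), the identities $\big(\bv_i p_i\big)\rdd r=\bw_i(p_i\rdd r)$ and $r\ldd\big(\bv_i q_i\big)=\bw_i(r\ldd q_i)$ follow from the adjunctions, and your left/right bookkeeping pairing $\searrow$ with $\rdd$ and $\swarrow$ with $\ldd$ is consistent with the paper's conventions for a noncommutative $\with$. Deriving (i) from (ii) and the unit inequality $\gamma\leq f^{-1}(f(\gamma))$ is a clean touch; the direct argument via Proposition~\ref{prop of impli}(iv) works equally well.
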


For each unital quantale $\sQ$, the Goguen category $\setQ$  refers to the category for which  \begin{itemize} \item  an object is a fuzzy set  $(X,\alpha)$; \item  a morphism (called a Goguen map)  $f$ from $(X,\alpha)$ to $(Y,\beta)$ is a map $f\colon X\to Y$ such that $\alpha\leq\beta\circ f$;  \item  composition is the usual composition of maps. \end{itemize}

\subsection*{Monads and their algebras}
A  monad  $\mathbb{T}=(T,\mu,\eta)$ in a category $\mathcal{A}$ consists of a functor $T\colon\mathcal{A}\to\mathcal{A}$ and two natural transformations \[\mu\colon T^2\to T, \quad \eta\colon{\rm id}_\mathcal{A}\to T\] which make the following  diagrams commutative:
$$\bfig \square[T^3`T^2`T^2`T;T\mu`\mu T`\mu`\mu]
\Vtrianglepair(1100,0)/>`<-`@{=}`>`@{=}/[T`T^2`T`T;\eta T`T\eta`
`\mu` ] \efig$$ The natural transformations $\eta$ and $\mu$ are called the  unit and  the  multiplication  of the monad, respectively.

\begin{exmp}(\cite[Example 2.16]{Manes2003})\label{powerset monad} The  (covariant) powerset functor  on {\sf Set} is the functor \[\exp \colon {\sf Set}\to{\sf Set}\] that assigns to each   $f\colon X\to Y$ the map $$\exp f\colon 2^X\to 2^Y, \quad A\mapsto f(A).$$ The functor $\exp$ gives rise to a monad \[ (\exp,\sfm,\sfe)\] in $\Set$,  where for each set $X$,  \begin{itemize}  \item  $\sfe_X\colon X\to 2^X$ sends each $x\in X$ to the singleton set $\{x\}$;
\item  $\sfm_X\colon  2^{2^X} \to  2^X $ sends each $\mathcal{F}\in2^{2^X}$ to the union of $\mathcal{F}$.   \end{itemize}
 \end{exmp}

If $F\colon\mathcal{A}\to\mathcal{B}$ is left adjoint to $G\colon\mathcal{B}\to\mathcal{A}$, then the adjunction $F\dashv G$ defines a monad   $$\mathbb{T}=(GF,G\varepsilon F,\eta)$$   in
$\mathcal{A}$, where $\eta$ and $\varepsilon$ are the unit and counit of the adjunction, respectively. 

\begin{exmp}(\cite[Example 2.11]{Manes2003}) The  contravariant powerset functor  on {\sf Set} is the functor \[\exp^{-1} \colon {\sf Set}^{\rm op}\to{\sf Set}\] that assigns to each   $f\colon X\to Y$ the map $$\exp^{-1} f=f^{-1}\colon 2^Y\to 2^X, \quad B\mapsto f^{-1}(B).$$ The functor $\exp^{-1}$ is right adjoint to its opposite \[(\exp^{-1})^{\rm op}\colon {\sf Set}\to{\sf Set}^{\rm op}.\]
The monad  defined by the adjunction $(\exp^{-1})^{\rm op} \dashv\exp^{-1}$ is called  the double contravariant powerset monad in {\sf Set}.\end{exmp}

 Let $\mathbb{T}=(T,\mu, \eta)$ be a monad in $\mathcal{A}$. A
$\mathbb{T}$-algebra is a pair $(A,h)$ consisting of an object $A$
 and a morphism
$h\colon TA\to A$ in $\mathcal{A}$   such that the
diagrams
$$\bfig \square[T^2A`TA`TA`A; Th`\mu_A`h`h]
\qtriangle(1100,0)/>`@{=}`>/[A`TA`A; \eta_A` `h] \efig$$ are
commutative. A morphism $f\colon (A,h)\to(A',f')$ of $\mathbb{T}$-algebras is a morphism $f\colon A\to A'$ of $\mathcal{A}$ that makes the square $$\bfig \square[TA`A`TA'`A'; h`Tf`f`h'] \efig$$
commutative. The category of $\mathbb{T}$-algebras and their
morphisms is   called the Eilenberg-Moore category  of the monad $\mathbb{T}=(T,\mu, \eta)$.

Assume that $F\colon\mathcal{A}\to\mathcal{B}$ is left adjoint to $G\colon\mathcal{B}\to\mathcal{A}$;  assume that     $\mathbb{T}=(GF,G\varepsilon F,\eta)$ is the   monad  in
$\mathcal{A}$ defined by the adjunction  $F\dashv G$.
Then, for each object $B$ of $\mathcal{B}$, the pair $(GB, G\varepsilon_B)$ is a $\mathbb{T}$-algebra and the assignment
$$ B\to^fB'~\mapsto~ (GB, G\varepsilon_B)\to^{Gf} (GB',
G\varepsilon_{B'}) $$ defines a functor  from $\mathcal{B}$ to the Eilenberg-Moore category of $\mathbb{T}$, called  the  comparison functor.
\begin{defn}(\cite{Riehl}) \begin{enumerate}[label={\rm(\roman*)}] \setlength{\itemsep}{0pt} \item 
An adjunction $F\dashv G$ is monadic if the comparison functor is an equivalence of categories.

\item
 A functor $G\colon\mathcal{B}\to\mathcal{A}$ is monadic if it admits a left adjoint that defines a monadic adjunction.
\item
 A category $\mathcal{B}$ is monadic over a category $\mathcal{A}$ if there
exists a functor $G\colon\mathcal{B}\to\mathcal{A}$ that is  monadic.
\end{enumerate}\end{defn}

\begin{exmp}(\cite{Manes76,Manes2003}) The Eilenberg-Moore category of the powerset monad $(\exp,\sfm,\sfe)$ in {\sf Set} is isomorphic to the category of complete lattices and join-preserving map.
An algebra for the double contravariant powerset monad in ${\sf Set}$ is  a complete and atomic Booelan algebra. The  functor   $\exp^{-1} \colon {\sf Set}^{\rm op}\to{\sf Set}$ is monadic, hence the category of sets is dually monadic over itself. \end{exmp}


Let $\mathbb{S}=(S,\sfm,\sfe)$ and $\mathbb{T}=(T,\mu,\eta)$   be monads in a category $\mathcal{A}$. A \emph{monad map} from $\mathbb{S}$ to $\mathbb{T}$ \cite{Manes2003} is a natural transformation $\kappa\colon S\to T$ for which  the diagrams \[\bfig\ptriangle[\id`S`T;\sfe`\eta`\kappa] \square(1100,0)[S^2`T^2`S`T;\kappa*\kappa`\sfm` \mu`\kappa]\efig\] are commutative, where $\kappa*\kappa$ stands for the horizontal composite of $\kappa$ with itself. If $\kappa$, as a morphism between functors, is a monomorphism, then we say that $\mathbb{S}=(S,\sfm,\sfe)$ is a \emph{submonad} of $\mathbb{T}=(T,\mu,\eta)$.

The covariant powerset monad in $\Set$ can be made into  a submonad of the double contravariant powerset monad in $\Set$,  see e.g. \cite[page 79]{Manes2003}.

\subsection*{Lifting of monads}
By a concrete category over   sets  \cite{AHS} we mean a pair $(\mathcal{A},U)$, where $\mathcal{A}$ is a category and $U\colon \mathcal{A}\to {\sf Set}$ is a faithful functor (called the forgetful functor). In the case that the forgetful functor is evident, we just say that $\mathcal{A}$ is a concrete category.
Together with the forgetful functor $$U\colon\setQ\to\Set, $$ the Goguen category $\setQ$ becomes a concrete category.


Let $(\mathcal{A},U)$ and $(\mathcal{B},V)$ be concrete categories over   sets.\begin{enumerate}[label={\rm(\roman*)}] \setlength{\itemsep}{0pt} \item  A functor $\mathscr{F}\colon \mathcal{A}\to \mathcal{B}$ is a lifting of a functor $F\colon\Set\to\Set$    if $F\circ U= V\circ \mathscr{F}$; that is, the square \[\bfig\square[\mathcal{A}`\mathcal{B}`\Set`\Set;\mathscr{F}`U`V`F]\efig\] commutes.

    Similarly, a functor $\mathscr{K}\colon \mathcal{A}^{\rm op}\to \mathcal{B}$ is a lifting of a functor $K\colon{\sf Set}^{\rm op}\to\Set$    if $K\circ U^{\rm op}= V\circ \mathscr{K}$.
\item A natural transformation $$\bfig\morphism|a|/@{->}@<5.5pt>/<500,0>[\mathcal{A}`\mathcal{B};\mathscr{F}] \morphism|b|/@{->}@<-5.5pt>/<500,0>[\mathcal{A}`\mathcal{B};\mathscr{G}] \morphism(250,90)|l|/=>/<0,-130>[`;\tau\thinspace]\efig$$    is   a lifting   of a natural transformation $$\bfig\morphism|a|/@{->}@<5.5pt>/<500,0>[\Set`\Set;F] \morphism|b|/@{->}@<-5.5pt>/<500,0>[\Set`\Set;G] \morphism(250,90)|l|/=>/<0,-130>[`;\kappa\thinspace]\efig$$   if  $\mathscr{F}$ is a lifting of $F$, $\mathscr{G}$ is a lifting of $G$, and $V\tau=\kappa U $. 

\item (C.f. \cite[page 87]{Monoidal top}) A monad $(\mathscr{T},\mu,\eta)$ in   $\mathcal{A}$ is a lifting of a monad $(T,\sfm,\sfe)$ in   $\Set$  if  the functor $\mathscr{T}$ is a lifting of the functor $T$, and  the natural transformations $\mu$ and $\eta$ are lifting of $\sfm$ and $\sfe$, respectively. \end{enumerate}

\begin{prop}\label{lifting of monad} Let $(\mathcal{A},U)$ be a concrete category over   sets. Suppose that \begin{enumerate}[label={\rm(\roman*)}] \setlength{\itemsep}{0pt} \item $(T,\sfm,\sfe)$  is a monad in   $\Set$; \item $\mathscr{T}\colon \mathcal{A}\to \mathcal{A}$ is a lifting of   $T\colon\Set\to\Set$; \item $\mu\colon \mathscr{T}^2\to \mathscr{T}$ is a lifting of $\sfm\colon T^2\to T$ and $\eta\colon\id_\mathcal{A}\to\mathscr{T}$ is a lifting of $\sfe\colon\id_{\sf Set}\to T$. \end{enumerate} Then $(\mathscr{T},\mu,\eta)$ is a monad in $\mathcal{A}$ and it  is a lifting of $(T,\sfm,\sfe)$. \end{prop}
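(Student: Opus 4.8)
The plan is to deduce everything from the faithfulness of $U$. Recall that a faithful functor reflects the commutativity of diagrams of morphisms; so once we know that applying $U$ to the (components of the) monad diagrams for $(\mathscr{T},\mu,\eta)$ produces the monad diagrams for $(T,\sfm,\sfe)$, the required commutativities in $\mathcal{A}$ follow from those in $\Set$. Before that, I would record the elementary bookkeeping: from $U\mathscr{T}=TU$ one gets $U\mathscr{T}^{n}=T^{n}U$ for every $n\geq 0$ (in particular $\mathscr{T}^{2}$ is a lifting of $T^{2}$, so hypothesis (iii) is well posed), and hence all the functors occurring in the monad diagrams for $\mathscr{T}$ lie over the corresponding functors for $T$.

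Next I would translate the structure maps. Since $\mu$ lifts $\sfm$, its component satisfies $U\mu_{A}=\sfm_{UA}$; whiskering on the left by $\mathscr{T}$ then gives, componentwise, $U\big((\mathscr{T}\mu)_{A}\big)=U\mathscr{T}(\mu_{A})=T(U\mu_{A})=T(\sfm_{UA})=(T\sfm)_{UA}$, and whiskering on the right gives $U\big((\mu\mathscr{T})_{A}\big)=U(\mu_{\mathscr{T}A})=\sfm_{U\mathscr{T}A}=\sfm_{TUA}=(\sfm T)_{UA}$; the analogous identities hold with $\eta$, $\sfe$ in place of $\mu$, $\sfm$. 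Because $U$ preserves composition, applying $U$ to the associativity square for $\mathscr{T}$, evaluated at an arbitrary object $A$, yields the equation $(\sfm\circ T\sfm)_{UA}=(\sfm\circ\sfm T)_{UA}$, which is exactly (a component of) the associativity axiom for $(T,\sfm,\sfe)$ and therefore holds. Faithfulness of $U$ now forces $(\mu\circ\mathscr{T}\mu)_{A}=(\mu\circ\mu\mathscr{T})_{A}$ for every $A$, i.e. the associativity square commutes in $\mathcal{A}$. The two unit triangles are handled identically: applying $U$ turns them, at $UA$, into $\sfm\circ\sfe T=\id_{TUA}=\sfm\circ T\sfe$, the unit axioms for $T$, and faithfulness then gives $(\mu\circ\eta\mathscr{T})_{A}=\id_{\mathscr{T}A}=(\mu\circ\mathscr{T}\eta)_{A}$. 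Thus $(\mathscr{T},\mu,\eta)$ is a monad in $\mathcal{A}$.

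Finally, that this monad is a lifting of $(T,\sfm,\sfe)$ is immediate from the hypotheses: $\mathscr{T}$ lifts $T$ by (ii), and $\mu$, $\eta$ lift $\sfm$, $\sfe$ by (iii), which is precisely the definition of a monad lifting. The argument presents no genuine obstacle; the only point demanding attention is the whiskering bookkeeping in the second step---keeping straight at which object each component of $\mathscr{T}\mu$, $\mu\mathscr{T}$, $\mathscr{T}\eta$, $\eta\mathscr{T}$ lives and checking that $U$ sends it to the corresponding component of $T\sfm$, $\sfm T$, $T\sfe$, $\sfe T$---after which the whole statement collapses to the monad identities for $(T,\sfm,\sfe)$ together with the injectivity of $U$ on hom-sets.
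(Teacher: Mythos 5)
Your proposal is correct and follows essentially the same route as the paper: apply the faithful functor $U$ to each component of the monad diagrams, identify the result with the corresponding monad identity for $(T,\sfm,\sfe)$ in $\Set$ via the lifting equations, and conclude by faithfulness. The extra whiskering bookkeeping you spell out (e.g.\ $U((\mathscr{T}\mu)_A)=(T\sfm)_{UA}$ and $U((\mu\mathscr{T})_A)=(\sfm T)_{UA}$) is exactly the computation the paper performs for the associativity square, with the unit triangles handled identically.
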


\begin{proof}We check  for example  the commutativity of the square: $$\bfig \square[\mathscr{T}^3`\mathscr{T}^2`\mathscr{T}^2`\mathscr{T};\mathscr{T}\mu`\mu \mathscr{T}`\mu`\mu]\efig$$ For each object $A$ of $\mathcal{A}$,  since \begin{align*}U((\mu\circ \mathscr{T}\mu)_A)&=U(\mu_A\circ \mathscr{T}\mu_A)\\ &= U(\mu_A)\circ U(\mathscr{T}\mu_A)\\ &= \sfm_{U(A)}\circ T\sfm_{U(A)}\\ &= (\sfm\circ T\sfm)_{U(A)} \\ &= (\sfm\circ\sfm T)_{U(A)}\\ &= U((\mu\circ\mu \mathscr{T})_A) \end{align*} then $\mu\circ \mathscr{T}\mu= \mu\circ\mu \mathscr{T}$ because $U$ is faithful. \end{proof}

\section{The powerset monad in $\setQ$}
This section recalls the construction of the powerset monad $(\mathscr{U},\sfm,\sfe)$ in $\setQ$, which  first appeared in \cite[Section 4]{EKS12} under the name \emph{unbalanced powerobject monad}. In next section, we shall see that for a commutative quantale,  $(\mathscr{U},\sfm,\sfe)$ is a submonad of the double contravariant powerset monad in $\setQ$.

For each object $(X,\alpha)$ of $\setQ$, let \[\mathscr{U}(X,\alpha)=(\sQ^X,\alpha^\downarrow),\] where for all $\gamma\in\sQ^X$, $$\alpha^\downarrow(\gamma)=\alpha\swarrow\gamma=\bw_{x\in X}\alpha(x)\ldd\gamma(x).$$   For each Goguen map $f\colon(X,\alpha)\to(Y,\beta)$, the map \[\mathscr{U} f\colon(\sQ^X,\alpha^\downarrow)\to (\sQ^Y,\beta^\downarrow), \quad \gamma\mapsto f(\gamma)\] satisfies the Goguen condition since \[\alpha^\downarrow(\gamma)=\alpha\swarrow\gamma\leq f(\alpha)\swarrow f(\gamma)\leq \beta\swarrow f(\gamma)=\beta^\downarrow(f(\gamma)).\] Thus, the assignment $f\mapsto \mathscr{U} f$ defines a functor \[\mathscr{U}\colon\setQ\to\setQ,\] called the (covariant) \emph{powerset functor on $\setQ$}.



\begin{lem}For each object $(X,\alpha)$ of $\setQ$,  both $$\sfe_{(X,\alpha)}\colon (X,\alpha)\to (\sQ^X,\alpha^\downarrow),\quad \sfe_{(X,\alpha)}(x) =k_x $$ and \[\sfm_{(X,\alpha)}\colon (\sQ^{\sQ^X},\alpha^{\downarrow\downarrow})\to (\sQ^X,\alpha^\downarrow),\quad \sfm_{(X,\alpha)}(\Lambda)= \bv_{\gamma\in\sQ^X}\Lambda(\gamma)\with\gamma\]  are Goguen maps. \end{lem}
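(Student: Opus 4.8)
The plan is to verify the Goguen condition for each of the two maps separately by direct computation with the quantale implications, relying on Proposition~\ref{prop of impli} and the observations about $\swarrow$ recorded above.

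For $\sfe_{(X,\alpha)}$ I must show $\alpha(x)\leq\alpha^\downarrow(k_x)$ for every $x\in X$. By definition $\alpha^\downarrow(k_x)=\alpha\swarrow k_x=\bw_{y\in X}\alpha(y)\ldd k_x(y)$. For $y\neq x$ the term is $\alpha(y)\ldd 0=1$, so the meet reduces to the single term $\alpha(x)\ldd k_x(x)=\alpha(x)\ldd k$, which equals $\alpha(x)$ by Proposition~\ref{prop of impli}(ii). Hence $\alpha^\downarrow(k_x)=\alpha(x)$, and in particular $\alpha(x)\leq\alpha^\downarrow(k_x)$, as required.

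For $\sfm_{(X,\alpha)}$ I must show, for each $\Lambda\in\sQ^{\sQ^X}$, that
\[
\alpha^{\downarrow\downarrow}(\Lambda)\;=\;\bw_{\gamma\in\sQ^X}\alpha^\downarrow(\gamma)\ldd\Lambda(\gamma)\;\leq\;\alpha^\downarrow\Big(\bv_{\gamma\in\sQ^X}\Lambda(\gamma)\with\gamma\Big).
\]
Unravelling the right-hand side: since suprema in $\sQ^X$ are computed pointwise, $\big(\bv_\gamma\Lambda(\gamma)\with\gamma\big)(x)=\bv_\gamma\Lambda(\gamma)\with\gamma(x)$, so
\[
\alpha^\downarrow\Big(\bv_{\gamma}\Lambda(\gamma)\with\gamma\Big)=\bw_{x\in X}\Big(\alpha(x)\ldd\bv_{\gamma}\big(\Lambda(\gamma)\with\gamma(x)\big)\Big)
=\bw_{x\in X}\bw_{\gamma}\Big(\alpha(x)\ldd\big(\Lambda(\gamma)\with\gamma(x)\big)\Big),
\]
using $\lambda\swarrow(\bv_i\gamma_i)=\bw_i(\lambda\swarrow\gamma_i)$. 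So it suffices to show, for each fixed $x$ and $\gamma$, that $\alpha^\downarrow(\gamma)\ldd\Lambda(\gamma)\leq\alpha(x)\ldd(\Lambda(\gamma)\with\gamma(x))$, since the left side dominates $\bw_\gamma(\alpha^\downarrow(\gamma)\ldd\Lambda(\gamma))$. By the adjunction $p\leq r\ldd q\iff p\with q\leq r$, this reduces to
\[
\big(\alpha^\downarrow(\gamma)\ldd\Lambda(\gamma)\big)\with\alpha(x)\;\leq\;\Lambda(\gamma)\with\gamma(x).
\]
Now $\alpha^\downarrow(\gamma)=\alpha\swarrow\gamma\leq\alpha(x)\ldd\gamma(x)$, so by Proposition~\ref{prop of impli}(iv) applied with $z=\gamma(x)$ twice (or once for $\ldd$) we get $\alpha^\downarrow(\gamma)\with\gamma(x)\leq\gamma(x)\cdot$... more carefully: from $\alpha^\downarrow(\gamma)\leq\alpha(x)\ldd\gamma(x)$ and Proposition~\ref{prop of impli}(iv) we obtain $\alpha^\downarrow(\gamma)\with\alpha(x)\leq(\alpha(x)\ldd\gamma(x))\with\alpha(x)\leq\gamma(x)$. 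Combining with the evident $(\alpha^\downarrow(\gamma)\ldd\Lambda(\gamma))\with\alpha^\downarrow(\gamma)\leq\Lambda(\gamma)$ (again Proposition~\ref{prop of impli}(iv)), and using that $\with$ is monotone and distributes over suprema, I chain these inequalities to bound $\big(\alpha^\downarrow(\gamma)\ldd\Lambda(\gamma)\big)\with\alpha(x)$ from above by $\Lambda(\gamma)\with\gamma(x)$.

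The one delicate point, and the step I expect to require the most care, is the last bookkeeping: I need to insert a copy of $\alpha^\downarrow(\gamma)$ to "consume" the $\ldd\Lambda(\gamma)$ while simultaneously using another part of $\alpha^\downarrow(\gamma)$ (together with $\alpha(x)$) to produce $\gamma(x)$. This is not possible with a single copy unless one argues at the level of a chain of residuations rather than splitting $\alpha^\downarrow(\gamma)$; concretely, one should instead estimate directly
\[
\big(\alpha^\downarrow(\gamma)\ldd\Lambda(\gamma)\big)\with\alpha(x)
\leq\big(\alpha^\downarrow(\gamma)\ldd\Lambda(\gamma)\big)\with\big(\alpha^\downarrow(\gamma)\ldd\gamma(x)\big)^{\!-1}\!\!\cdots
\]
— that is, one passes through $\gamma(x)\ldd\Lambda(\gamma)$ or uses Proposition~\ref{prop of impli}(v)--(vi) to compose $\ldd$'s. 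The cleanest route is: show $\alpha(x)\with\gamma(x)^{\,?}\cdots$; equivalently, prove the contrapositive via the adjunction in the other direction, i.e.\ show $\alpha(x)\with\Lambda(\gamma)\with\gamma(x)^{\searrow}\cdots$. I will finalise this by working with the residuated inequality $\alpha^\downarrow(\gamma)\with\Lambda(\gamma)^{?}$ rephrased as $\Lambda(\gamma)\leq\alpha^\downarrow(\gamma)\rdd(\cdots)$ and then applying Proposition~\ref{prop of impli}(v). Everything else is a routine reduction: pointwise suprema, distributivity of $\with$ over $\bv$, and the $\lambda\swarrow$-identities already displayed.
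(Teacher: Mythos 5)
The first half of your argument, for $\sfe_{(X,\alpha)}$, is correct and coincides with the paper's computation: $\alpha^\downarrow(k_x)=\alpha\swarrow k_x=\alpha(x)\ldd k=\alpha(x)$, so $\sfe_{(X,\alpha)}$ even preserves the membership degree exactly.

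The second half contains a genuine error followed by a gap that is never closed. Your reduction to the pointwise inequality $\alpha^\downarrow(\gamma)\ldd\Lambda(\gamma)\leq\alpha(x)\ldd\big(\Lambda(\gamma)\with\gamma(x)\big)$ for fixed $x$ and $\gamma$ is fine, but you then apply the adjunction $p\leq r\ldd q\iff p\with q\leq r$ with the roles of the terms scrambled: taking $r=\alpha(x)$ and $q=\Lambda(\gamma)\with\gamma(x)$, the correct equivalent form is
\[
\big(\alpha^\downarrow(\gamma)\ldd\Lambda(\gamma)\big)\with\Lambda(\gamma)\with\gamma(x)\;\leq\;\alpha(x),
\]
whereas you wrote $\big(\alpha^\downarrow(\gamma)\ldd\Lambda(\gamma)\big)\with\alpha(x)\leq\Lambda(\gamma)\with\gamma(x)$, which is not equivalent and is false in general: for $\Lambda(\gamma)=0$ the left side is $1\with\alpha(x)$ while the right side is $0$. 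This wrong target is precisely what creates your ``delicate point'' about needing two copies of $\alpha^\downarrow(\gamma)$; the correct target needs no such splitting, since $\big(\alpha^\downarrow(\gamma)\ldd\Lambda(\gamma)\big)\with\Lambda(\gamma)\leq\alpha^\downarrow(\gamma)\leq\alpha(x)\ldd\gamma(x)$ and $\big(\alpha(x)\ldd\gamma(x)\big)\with\gamma(x)\leq\alpha(x)$, both instances of Proposition~\ref{prop of impli}(iv). As written, your final paragraph is a sequence of unfinished expressions and never completes the estimate, so the $\sfm_{(X,\alpha)}$ half of the lemma is not proved. For comparison, the paper avoids the fixed-$(x,\gamma)$ case analysis entirely and establishes the stronger fact that $\alpha^{\downarrow\downarrow}(\Lambda)=\alpha^\downarrow\big(\sfm_{(X,\alpha)}(\Lambda)\big)$, by rewriting $\big(\alpha(x)\ldd\gamma(x)\big)\ldd\Lambda(\gamma)=\alpha(x)\ldd\big(\Lambda(\gamma)\with\gamma(x)\big)$ via Proposition~\ref{prop of impli}(v) and using that $\alpha(x)\ldd(-)$ converts suprema into infima.
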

\begin{proof} The conclusion is   contained in \cite{EKS12},  the verification is included here for convenience of the reader.

For each $x\in X$, \[\alpha^\downarrow(\sfe_{(X,\alpha)}(x)) = \alpha\swarrow k_x= \alpha(x)\ldd k=\alpha(x), \] hence $\sfe_{(X,\alpha)}$ is a Goguen map.

For each $\Lambda\in\sQ^{\sQ^X}$, \begin{align*}\alpha^{\downarrow\downarrow}(\Lambda)&= \bw_{\gamma\in\sQ^X}(\alpha\swarrow\gamma)\ldd\Lambda(\gamma) \\ &= \bw_{x\in X}\bw_{\gamma\in\sQ^X}(\alpha(x)\ldd\gamma(x))\ldd\Lambda(\gamma)\\ &= \bw_{x\in X} \Big(\alpha(x)\ldd\bv_{\gamma\in\sQ^X}\Lambda(\gamma)\with\gamma(x) \Big) \\ &= \alpha^\downarrow(\sfm_{(X,\alpha)}(\Lambda)), \end{align*} hence $\sfm_{(X,\alpha)}$ is a Goguen map. \end{proof}

The triple \[ (\mathscr{U},\sfm,\sfe) \] is a monad in $\setQ$. Instead of verifying directly that $\sfe$ and $\sfm$ are  natural transformations and satisfy the monad requirements, we show that  it is a lifting of a monad in $\Set$, namely, the (covariant) $\sQ$-powerset monad   described below.

Assigning to each   $f\colon X\to Y$  the map \[\exp_\sQ f\colon \sQ^X\to \sQ^Y, \quad  \gamma\mapsto f(\gamma) \] defines a functor \[\exp_\sQ\colon{\sf Set}\to{\sf Set},\] called the (covariant) \emph{$\sQ$-powerset functor}.

The functor $\exp_\sQ$ gives rise to a monad \[ (\exp_\sQ,\sfm,\sfe)\] in the category of sets \cite{AM1975,Machner,Manes1982},  where for each set $X$,  \begin{itemize}  \item  $\sfe_X\colon X\to \sQ^X$ is the map such that for all $x\in X$, $\sfe_X(x)=k_x$;
\item  $\sfm_X\colon  \sQ^{\sQ^X} \to  \sQ^X $ is the map such that  for all $\Lambda\in\sQ^{\sQ^X}$ and $x\in X$, \[\sfm_X(\Lambda)(x)= \bv_{\gamma\in\sQ^X}\Lambda(\gamma)\with\gamma(x).\]   \end{itemize}

When $\sQ$ is the Boolean algebra $2=\{0,1\}$, the monad  $(\exp_\sQ,\sfm,\sfe)$ is then the  powerset monad $(\exp,\sfm,\sfe)$ in Example \ref{powerset monad}. Thus, for a general unital quantale $\sQ$, we call $(\exp_\sQ,\sfm,\sfe)$   the   $\sQ$-powerset monad in $\Set$.

For each set $X$, define $\kappa_X\colon 2^X\to\sQ^X$ by \[\kappa_X(A)(x)=\begin{cases}k& x\in A,\\ 0&x\notin A.\end{cases}\] Then $\kappa=\{\kappa_X\}_X$ is a monad map, exhibiting $(\exp,\sfm,\sfe)$ as a submonad of  $(\exp_\sQ,\sfm,\sfe)$.

It is clear that \begin{itemize}\item the functor  $\mathscr{U}$ is a lifting of the functor $\exp_\sQ$; \item   for each object $(X,\alpha)$ of $\setQ$, $U(\sfm_{(X,\alpha)})  =\sfm_X$; \item for each object $(X,\alpha)$ of $\setQ$,  $ U(\sfe_{(X,\alpha)})  =\sfe_X$. \end{itemize} Then by Proposition \ref{lifting of monad},   $(\mathscr{U},\sfm,\sfe)$ is a monad in $\setQ$,  a lifting of $(\exp_\sQ,\sfm,\sfe)$. Because of this fact, we   call $(\mathscr{U},\sfm,\sfe)$ the \emph{powerset monad in $\setQ$}, instead of the \emph{unbalanced powerobject monad} as in \cite{EKS12}.

\begin{rem}The  monad $(\exp_\sQ,\sfm,\sfe)$ can be lifted to a monad in $\setQ$ in different ways,   one lifting different from $\mathscr{U}$ is given below.

For each object $(X,\alpha)$ of $\setQ$, define $\alpha^\circ\colon\sQ^X\to\sQ$ by \[\alpha^\circ(\gamma)=\bv_{x\in X} \gamma(x)\with\alpha(x).\] The assignment $(X,\alpha)\mapsto(\sQ^X,\alpha^\circ)$ yields a functor \[ \mathscr{W}\colon\setQ\to\setQ.\] Both $\sfe_X\colon (X,\alpha)\to (\sQ^X, \alpha^\circ)$ and $\sfm_X\colon (\sQ^{\sQ^X}, \alpha^{\circ\circ})\to (\sQ^X, \alpha^\circ)$ are Goguen maps (verifications are left to the reader), so  the triple \[(\mathscr{W},\sfm,\sfe)\] is a monad in $\setQ$ and it is also a lifting of $(\exp_\sQ,\sfm,\sfe)$.
\end{rem}


The fact that $(\mathscr{U},\sfm,\sfe)$ is a lifting of $(\exp_\sQ,\sfm,\sfe)$ is very useful.  In the following we use this fact to determine the Eilenberg-Moore algebras of $(\mathscr{U},\sfm,\sfe)$.
Let $((X,\alpha),h)$ be an algebra of the monad $(\mathscr{U},\sfm,\sfe)$.  By definition $h\colon (\sQ^X,\alpha^\downarrow)\to(X,\alpha)$ is a Goguen map. Since $(\mathscr{U},\sfm,\sfe)$ is a lifting of $(\exp_\sQ,\sfm,\sfe)$, it is readily verified that $(X,h)$ is  an algebra of the monad $(\exp_\sQ,\sfm,\sfe)$. Conversely, if $(X,h)$ is an algebra of the monad $(\exp_\sQ,\sfm,\sfe)$ and $h\colon (\sQ^X,\alpha^\downarrow)\to(X,\alpha)$ satisfies the Goguen condition, then $((X,\alpha),h)$ is an algebra of the monad $(\mathscr{U},\sfm,\sfe)$. Therefore, in order to determine algebras of the monad  $(\mathscr{U},\sfm,\sfe)$, we need to determine algebras of  $(\exp_\sQ,\sfm,\sfe)$ first.

The algebras of $(\exp_\sQ,\sfm,\sfe)$ have been determined in \cite{Machner,PT1989}. These algebras can be described either as cocomplete $\sQ$-lattices or as $\sQ$-modules. A sketch of the ideas is included here for convenience of the reader.

A $\sQ$-order on a set $X$ is a map  $o\colon X\times X\to\sQ$   such that  for all $x,y,z\in X$, $$k\leq o(x,x) \quad \text{and}\quad  o(y,z)\with o(x,y)\leq o(x,z). $$ The pair $(X,o)$ is called a $\sQ$-ordered set or a $\sQ$-category \cite{EGHK,Stubbe2005}.

A map $f\colon(X,o_X)\to(Y,o_Y)$ between $\sQ$-ordered sets is said to preserve  $\sQ$-order, if for all $x_1,x_2\in X$, $$o_X(x_1,x_2)\leq o_Y(f(x_1),f(x_2)).$$ A $\sQ$-order-preserving map $f\colon(X,o_X)\to(Y,o_Y)$ is a left adjoint, if there is a $\sQ$-order-preserving map $g\colon(Y,o_Y)\to(X,o_X)$ such that for all $x\in X$ and $y\in Y$, \[o_Y(f(x),y)=o_X(x,g(y)).\]

Let $(X,o)$ be a $\sQ$-ordered set, $a\in X$, and $\gamma\in\sQ^X$. We say that \begin{itemize}\item  $(X,o)$ is \emph{separated}  if   $x=y$ whenever $k\leq o(x,y)\wedge o(y,x)$. \item   $a$  is a  \emph{supremum}  of $\gamma$  if  for all $y\in X$, $o(a,y)=\bw_{x\in X} o(x,y)\ldd\gamma(x).  $  \item $(X,o)$ is \emph{cocomplete} if  every   $\gamma\in\sQ^X$ has a supremum. \item $(X,o)$  is   a \emph{cocomplete $\sQ$-lattice} if it is both separated and cocomplete.\end{itemize}


\begin{defn} (\cite{Joyal-Tierney}) A   $\sQ$-module (precisely, a left $\sQ$-module) is a   pair $(X,\otimes)$, where $X$ is a complete lattice and $\otimes\colon \sQ\times X \to X$ is a   map, called a (left) $\sQ$-action on $X$,  subject to the following conditions:   for all $x\in X$ and $r,s\in \sQ$, \begin{enumerate}[label={\rm(\roman*)}] \setlength{\itemsep}{0pt}
  \item $k\otimes  x=x$, where $k$ is the unit of $\sQ$; \item $ s\otimes (r\otimes  x)  =(s\with r)\otimes x$; \item   $r\otimes -\colon X\to X$ preserve joins;   \item $-\otimes x\colon \sQ\to X$ preserve joins. \end{enumerate}A   homomorphism  $f\colon (X,\otimes)\to(Y,\otimes)$ between $\sQ$-modules is a join-preserving map $f\colon X\to Y$   that preserves the action, i.e., $ r\otimes f(x)= f(r\otimes x)$ for all $r\in \sQ$ and $x\in X$.\end{defn}

\begin{exmp}(\cite{Joyal-Tierney}) \label{QX as module} For each set $X$,  define  $\otimes\colon\sQ\times\sQ^X\to\sQ^X$ by $(r\otimes\gamma)(x)= r\with\gamma(x)$, then $(\sQ^X,\otimes)$ is a $\sQ$-module.\end{exmp}

Given a $\sQ$-module $(X,\otimes)$, define $o\colon X\times X\to\sQ$ by  \[o(x,y)=\bv\{r\in\sQ\mid r\otimes x\leq y\}. \]  Then $(X,o)$ is a cocomplete $\sQ$-lattice with supremum of $\gamma\in\sQ^X$ given by $$\sup\gamma=\bv_{x\in X}\gamma(x)\otimes x.$$  Conversely, given a cocomplete $\sQ$-lattice $(X,o)$, define a binary relation $\leq$ on $X$ by letting $x\leq y$ if $k\leq o(x,y)$. Then $(X,\leq)$ is a complete lattice. Furthermore,   the assignment  $(r,x)\mapsto \sup r_x$   defines  a $\sQ$-action on the complete lattice $(X,\leq)$. These   processes are inverse to each other, hence the category of cocomplete $\sQ$-lattices and left adjoints is isomorphic to that of $\sQ$-modules and $\sQ$-module homomorphisms. Details of these claims can be found in \cite[Section 4]{Stubbe2006} or \cite[Section 3.3]{EGHK}.

Let $(X,o)$ be cocomplete $\sQ$-lattice. Then $X$ together with the map  $\sup\colon \sQ^X\to X$  is an algebra of the monad $(\exp_\sQ,\sfm,\sfe)$. Conversely, let $(X,h)$ be an algebra of $(\exp_\sQ,\sfm,\sfe)$. Since the powerset monad $(\exp,\sfm,\sfe)$ is a submonad of $(\exp_\sQ,\sfm,\sfe)$, $X$ together with the restriction of $h$ on $2^X$ is an algebra of $(\exp,\sfm,\sfe)$, hence $X$ is a complete lattice and  $h$  maps each subset of $X$ to its join \cite[page 142]{MacLane1998}.  Define $\otimes\colon \sQ\times X\to X$ by \(r\otimes x=h(r_x).\)   Then $(X,\otimes)$ is a $\sQ$-module. Therefore, an algebra of the monad $(\exp_\sQ,\sfm,\sfe)$ is essentially a  cocomplete $\sQ$-lattice, or equivalently, a $\sQ$-module.

Now we are able to describe algebras of the monad $(\mathscr{U},\sfm,\sfe)$.

\begin{prop}\label{as modules}  An algebra of the monad $(\mathscr{U},\sfm,\sfe)$ is a fuzzy set $\alpha\colon X\to\sQ$ of a cocomplete $\sQ$-lattice  $(X, o)$ such that the map $$\sup\colon (\sQ^X,\alpha^\downarrow)\to(X,\alpha)$$  satisfies the Goguen condition; that is, for all $\gamma\in\sQ^X$, \[ \alpha\swarrow\gamma\leq \alpha(\sup\gamma).\] A  homomorphism is a map $f\colon X\to Y$ that is simultaneously a Goguen map $f\colon(X,\alpha)\to(Y,\beta)$ and a left adjoint $f\colon(X,o_X)\to(Y,o_Y)$. \end{prop}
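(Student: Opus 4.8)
\section*{Proof proposal}

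The plan is to assemble two facts already established in this section. First, since $(\mathscr{U},\sfm,\sfe)$ is a lifting of the $\sQ$-powerset monad $(\exp_\sQ,\sfm,\sfe)$ along the forgetful functor $U\colon\setQ\to\Set$, an algebra of $(\mathscr{U},\sfm,\sfe)$ on an object $(X,\alpha)$ is precisely a map $h\colon\sQ^X\to X$ that is an $(\exp_\sQ,\sfm,\sfe)$-algebra structure on $X$ and that, in addition, satisfies the Goguen condition $h\colon(\sQ^X,\alpha^\downarrow)\to(X,\alpha)$. Second, an $(\exp_\sQ,\sfm,\sfe)$-algebra structure $h$ on a set $X$ is the same thing as a cocomplete $\sQ$-lattice structure $o$ on $X$, with $h$ recovered as the supremum map $\sup\colon\sQ^X\to X$ of $(X,o)$ (through $r\otimes x=h(r_x)$, $\sup\gamma=\bv_{x\in X}\gamma(x)\otimes x=h(\gamma)$ and $o(x,y)=\bv\{r\mid r\otimes x\leq y\}$, these passages being mutually inverse). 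Combining the two, a $(\mathscr{U},\sfm,\sfe)$-algebra on $(X,\alpha)$ is exactly a cocomplete $\sQ$-lattice structure on $X$ for which $\sup\colon(\sQ^X,\alpha^\downarrow)\to(X,\alpha)$ is a Goguen map; since $\alpha^\downarrow(\gamma)=\alpha\swarrow\gamma$, this condition is the displayed inequality $\alpha\swarrow\gamma\leq\alpha(\sup\gamma)$ for all $\gamma\in\sQ^X$, which settles the object part.

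For the morphisms, let $f$ be a morphism of $(\mathscr{U},\sfm,\sfe)$-algebras from $((X,\alpha),h_X)$ to $((Y,\beta),h_Y)$. By definition $f\colon(X,\alpha)\to(Y,\beta)$ is a Goguen map, and applying $U$ to the defining square turns the morphism condition into $f\circ h_X=h_Y\circ\exp_\sQ f$, i.e. $f$ is a morphism of $(\exp_\sQ,\sfm,\sfe)$-algebras, equivalently $f(\sup_X\gamma)=\sup_Y(f(\gamma))$ for every $\gamma\in\sQ^X$. Under the classification recalled above, a morphism of $(\exp_\sQ,\sfm,\sfe)$-algebras is exactly a $\sQ$-module homomorphism, equivalently a left adjoint $f\colon(X,o_X)\to(Y,o_Y)$ between the associated cocomplete $\sQ$-lattices. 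Conversely, a map $f$ that is simultaneously a Goguen map and a left adjoint preserves all suprema, hence satisfies $f\circ\sup_X=\sup_Y\circ\exp_\sQ f$ and is a morphism of $(\mathscr{U},\sfm,\sfe)$-algebras. This yields the stated description of homomorphisms.

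The only step beyond bookkeeping is the equivalence ``$f$ preserves all suprema $\iff$ $f$ is a left adjoint between cocomplete $\sQ$-lattices'', which is the $\sQ$-enriched analogue of the classical fact that a join-preserving map of complete lattices has a right adjoint; it is available in the cited enriched-category literature (e.g. \cite{Stubbe2006,EGHK}) and is the one point to invoke with care. Everything else reduces to unwinding the lifting of $(\exp_\sQ,\sfm,\sfe)$ to $(\mathscr{U},\sfm,\sfe)$ together with the already-recalled classification of $(\exp_\sQ,\sfm,\sfe)$-algebras and their morphisms.
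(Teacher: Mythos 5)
Your proposal is correct and follows essentially the same route as the paper, which proves this proposition via the discussion preceding it: reduce to algebras of $(\exp_\sQ,\sfm,\sfe)$ using the lifting property, invoke the known identification of those algebras with cocomplete $\sQ$-lattices (equivalently $\sQ$-modules) and of their morphisms with left adjoints, and then add the Goguen condition on $h=\sup$. The one step you single out for care, that sup-preserving maps between cocomplete $\sQ$-lattices are exactly the left adjoints, is likewise handled in the paper by citation to the enriched-category literature.
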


The value $\alpha\swarrow\gamma$ can be viewed as the degree that the fuzzy set $\gamma$ is contained in the fuzzy set $\alpha$ (c.f. \cite[page 369]{Goguen69}),   the inequality $$\alpha\swarrow\gamma\leq \alpha(\sup\gamma)$$ says that $\alpha$ is closed under formation of suprema in the cocomplete $\sQ$-lattice $(X,o)$. So, an algebra of  $(\mathscr{U},\sfm,\sfe)$ is a fuzzy set of a cocomplete $\sQ$-lattice that is closed under formation of suprema.

In terms of $\sQ$-modules, we have:  \begin{prop}\label{as modules}  An algebra of  $(\mathscr{U},\sfm,\sfe)$ is a fuzzy set $\alpha\colon X\to\sQ$ of a $\sQ$-module  $(X, \otimes)$ such that \begin{enumerate}[label={\rm(\roman*)}] \setlength{\itemsep}{0pt}\item for each subset $A$ of $X$, $\bw_{x\in A}\alpha(x)\leq\alpha(\bv A)$; \item for each $r\in\sQ$ and   $x\in X$, $\alpha(x)\ldd r\leq \alpha(r\otimes x)$.\end{enumerate} A  homomorphism is a map $f\colon X\to Y$ that is simultaneously a Goguen map $f\colon(X,\alpha)\to(Y,\beta)$ and a $\sQ$-module homomorphism $f\colon(X,\otimes_X)\to(Y,\otimes_Y)$. \end{prop}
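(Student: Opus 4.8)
The plan is to deduce this from the preceding characterization of algebras of $(\mathscr{U},\sfm,\sfe)$ as fuzzy sets of cocomplete $\sQ$-lattices that are closed under suprema, together with the isomorphism (recalled just above) between the category of cocomplete $\sQ$-lattices and left adjoints and the category of $\sQ$-modules and $\sQ$-module homomorphisms. That isomorphism keeps the underlying set fixed: a cocomplete $\sQ$-lattice $(X,o)$ corresponds to the $\sQ$-module $(X,\otimes)$ whose underlying complete lattice has $x\leq y$ iff $k\leq o(x,y)$ and whose action is $r\otimes x=\sup r_x$, while conversely the supremum of $\gamma\in\sQ^X$ is recovered as $\sup\gamma=\bv_{x\in X}\gamma(x)\otimes x$; note also $0\otimes x=\bot$, since $-\otimes x\colon\sQ\to X$ preserves joins. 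Hence it suffices to prove two things: first, that for a fuzzy set $\alpha$ on such an $(X,\otimes)$ the Goguen condition $\alpha\swarrow\gamma\leq\alpha(\sup\gamma)$ for all $\gamma\in\sQ^X$ is equivalent to the conjunction of (i) and (ii); and second, that the two notions of homomorphism coincide.

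The homomorphism part requires nothing new: by the previous description a homomorphism of $(\mathscr{U},\sfm,\sfe)$-algebras is a map that is simultaneously a Goguen map $(X,\alpha)\to(Y,\beta)$ and a left adjoint $(X,o_X)\to(Y,o_Y)$, and under the isomorphism above a left adjoint between cocomplete $\sQ$-lattices is exactly a $\sQ$-module homomorphism.

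For the equivalence of conditions, first assume (i) and (ii). Given $\gamma\in\sQ^X$, condition (ii) gives $\alpha(x)\ldd\gamma(x)\leq\alpha(\gamma(x)\otimes x)$ for every $x$; taking infima and then applying (i) to the subset $A=\{\gamma(x)\otimes x\mid x\in X\}$ yields
\[\alpha\swarrow\gamma=\bw_{x\in X}\alpha(x)\ldd\gamma(x)\leq\bw_{x\in X}\alpha(\gamma(x)\otimes x)=\bw_{a\in A}\alpha(a)\leq\alpha(\bv A)=\alpha(\sup\gamma).\]
Conversely, assume the Goguen inequality for all $\gamma$. For (i), given $A\subseteq X$ take $\gamma$ with $\gamma(x)=k$ for $x\in A$ and $\gamma(x)=0$ otherwise; using $\alpha(x)\ldd k=\alpha(x)$ (Proposition \ref{prop of impli}(ii)), $\alpha(x)\ldd 0=1$ and $k\otimes x=x$ one computes $\alpha\swarrow\gamma=\bw_{x\in A}\alpha(x)$ and $\sup\gamma=\bv_{x\in A}x=\bv A$, so the inequality gives (i). For (ii), given $r\in\sQ$ and $x\in X$ take $\gamma=r_x$; then $\sup\gamma=r\otimes x$ (the remaining summands being $0\otimes y=\bot$) and $\alpha\swarrow\gamma=\alpha(x)\ldd r$, so the inequality gives $\alpha(x)\ldd r\leq\alpha(r\otimes x)$.

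The only step that needs a little care is this passage between a single inequality ranging over all $\gamma\in\sQ^X$ and the two pointwise conditions (i)–(ii): the forward direction relies on the decomposition $\sup\gamma=\bv_{x\in X}\gamma(x)\otimes x$, which is precisely the defining feature of the $\sQ$-module structure attached to a cocomplete $\sQ$-lattice, whereas the backward direction only tests the inequality against characteristic weights valued in $k$ and against the weights $r_x$, invoking no more than the identities $p\ldd k=p$, $p\ldd 0=1$ and $0\otimes x=\bot$. Everything else is routine bookkeeping with the implication $\ldd$.
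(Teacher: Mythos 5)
Your proposal is correct, and it follows exactly the route the paper sets up: the paper states this proposition without an explicit proof, relying on the preceding identification of $(\exp_\sQ,\sfm,\sfe)$-algebras with $\sQ$-modules (equivalently cocomplete $\sQ$-lattices) and on the translation $\sup\gamma=\bv_{x\in X}\gamma(x)\otimes x$, which is precisely what you use. Your verification that the single Goguen inequality $\alpha\swarrow\gamma\leq\alpha(\sup\gamma)$ is equivalent to conditions (i) and (ii) --- specializing $\gamma$ to $k$-valued characteristic weights and to $r_x$ in one direction, and composing (ii) pointwise with (i) in the other --- correctly supplies the details the paper leaves implicit.
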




\section{The double contravariant powerset monad in $\setQ$}
For each object $(X,\alpha)$ of $\setQ$, let \[\mathscr{P} (X,\alpha)=(\sQ^X, \alpha^\uparrow)\quad\text{and}\quad \mathscr{P}^\dag (X,\alpha)=(\sQ^X, \alpha_\dag^\uparrow),\] where  for all $\gamma\in\sQ^X$, $$\alpha^\uparrow(\gamma)=\gamma\swarrow\alpha\quad\text{and}\quad \alpha_\dag^\uparrow(\gamma)=\alpha\searrow\gamma.$$  

\begin{lem} For each Goguen map $f\colon(X,\alpha)\to(Y,\beta)$, both \[\mathscr{P} f\colon(\sQ^Y,\beta^\uparrow)\to (\sQ^X,\alpha^\uparrow),\quad \lambda\mapsto \lambda\circ f\] and \[\mathscr{P}^\dag f\colon(\sQ^Y,\beta_\dag^\uparrow)\to (\sQ^X,\alpha_\dag^\uparrow),\quad \lambda\mapsto \lambda\circ f\] satisfy the Goguen condition. \end{lem}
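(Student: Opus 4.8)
The plan is to unwind both Goguen conditions into inequalities in $\sQ$ and then collapse each one to a short chain using the residuation identities of Lemma \ref{image vs preimage} together with the variance of the implications. Spelling things out, $\mathscr{P}f$ satisfies the Goguen condition precisely when $\beta^\uparrow(\lambda)\leq\alpha^\uparrow(\lambda\circ f)$ for every $\lambda\in\sQ^Y$, i.e.
\[\lambda\swarrow\beta\leq(\lambda\circ f)\swarrow\alpha,\]
and $\mathscr{P}^\dag f$ satisfies it precisely when
\[\beta\searrow\lambda\leq\alpha\searrow(\lambda\circ f)\]
for every $\lambda\in\sQ^Y$. I would use two facts about $f$: first, $\lambda\circ f=f^{-1}(\lambda)$; second, the Goguen condition $\alpha\leq\beta\circ f=f^{-1}(\beta)$ on $f$ is, by Lemma \ref{image vs preimage}(iii), the same as the inequality $f(\alpha)\leq\beta$ in $\sQ^Y$.

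For the first inequality I would first replace $\beta$ by $f(\alpha)$: since $f(\alpha)\leq\beta$ and $-\swarrow-$ is order-reversing in its second argument (which follows from $\lambda\swarrow(\bv_i\gamma_i)=\bw_i(\lambda\swarrow\gamma_i)$, equivalently from the antitonicity of $\ldd$ in its second variable), we get $\lambda\swarrow\beta\leq\lambda\swarrow f(\alpha)$; and Lemma \ref{image vs preimage}(ii) rewrites $\lambda\swarrow f(\alpha)=f^{-1}(\lambda)\swarrow\alpha=(\lambda\circ f)\swarrow\alpha$, which is exactly what is wanted. The second inequality is the mirror image: $f(\alpha)\leq\beta$ and the antitonicity of $-\searrow-$ in its first argument give $\beta\searrow\lambda\leq f(\alpha)\searrow\lambda$, and Lemma \ref{image vs preimage}(ii) then gives $f(\alpha)\searrow\lambda=\alpha\searrow f^{-1}(\lambda)=\alpha\searrow(\lambda\circ f)$. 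One could equally avoid citing Lemma \ref{image vs preimage} and write the two lines out by hand, e.g. $\lambda\swarrow\beta=\bw_{y\in Y}\lambda(y)\ldd\beta(y)\leq\bw_{x\in X}\lambda(f(x))\ldd\beta(f(x))\leq\bw_{x\in X}\lambda(f(x))\ldd\alpha(x)=(\lambda\circ f)\swarrow\alpha$, where the first step just shrinks the index set to the image of $f$ and the second uses $\alpha(x)\leq\beta(f(x))$, and symmetrically for $\searrow$.

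The proof has no genuine obstacle; the only care needed is to use the implications with the correct variance — $p\ldd q$ is monotone in $p$ and antitone in $q$, and $p\rdd q$ is the reverse — so that the Goguen inequality $f(\alpha)\leq\beta$ is applied in the helpful direction. With the lemma in hand $\mathscr{P}$ and $\mathscr{P}^\dag$ carry morphisms to morphisms, and since $(\lambda\circ g)\circ f=\lambda\circ(g\circ f)$ and $\lambda\circ\id_X=\lambda$, the assignments $f\mapsto\mathscr{P}f$ and $f\mapsto\mathscr{P}^\dag f$ are contravariantly functorial, so in fact $\mathscr{P},\mathscr{P}^\dag\colon\setQ^{\rm op}\to\setQ$ are functors.
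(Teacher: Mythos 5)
Your argument for $\mathscr{P}f$ — pass from the Goguen condition on $f$ to $f(\alpha)\leq\beta$, use antitonicity of $\swarrow$ in its second argument, then apply Lemma \ref{image vs preimage}(ii) to rewrite $\lambda\swarrow f(\alpha)$ as $(\lambda\circ f)\swarrow\alpha$ — is exactly the paper's proof, and your mirror-image treatment of $\mathscr{P}^\dag f$ supplies the case the paper leaves to the reader. The proposal is correct and follows essentially the same route as the paper.
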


\begin{proof} We verify the case of $\mathscr{P} f$ for example.
Since $f\colon(X,\alpha)\to(Y,\beta)$ is a Goguen map, then $f(\alpha)\leq \beta$, hence by Lemma \ref{image vs preimage}, \[ \lambda\swarrow\beta \leq \lambda\swarrow f(\alpha) = (\lambda\circ f)\swarrow\alpha \] for all $\lambda\in\sQ^Y$, which shows that $\mathscr{P} f\colon(\sQ^Y,\beta^\uparrow)\to (\sQ^X,\alpha^\uparrow)$ satisfies the Goguen condition.\end{proof}

Therefore, we obtain two contravariant functors: \[\mathscr{P}\colon \setQ^{\rm op}\to\setQ \] and   $$\mathscr{P}^\dag \colon \setQ\to\setQ^{\rm op}.$$

\begin{prop}\label{adjunction P Pdagger}  $\mathscr{P}\colon \setQ^{\rm op}\to\setQ $ is right adjoint to $\mathscr{P}^\dag \colon \setQ\to\setQ^{\rm op}$.    \end{prop}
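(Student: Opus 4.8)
The plan is to establish $\mathscr{P}^\dag\dashv\mathscr{P}$ by exhibiting, for every object $(X,\alpha)$ of $\setQ$ and every object $(Z,\zeta)$ of $\setQ^{\rm op}$, a bijection
\[\setQ^{\rm op}\big(\mathscr{P}^\dag(X,\alpha),(Z,\zeta)\big)\;\cong\;\setQ\big((X,\alpha),\mathscr{P}(Z,\zeta)\big),\]
natural in both variables. Unwinding the definitions, an element of the left-hand side is a Goguen map $g\colon(Z,\zeta)\to(\sQ^X,\alpha_\dag^\uparrow)$ in $\setQ$, i.e.\ a function $g\colon Z\to\sQ^X$ with $\zeta(z)\leq\alpha\searrow g(z)$ for every $z\in Z$; an element of the right-hand side is a Goguen map $h\colon(X,\alpha)\to(\sQ^Z,\zeta^\uparrow)$, i.e.\ a function $h\colon X\to\sQ^Z$ with $\alpha(x)\leq h(x)\swarrow\zeta$ for every $x\in X$. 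The candidate bijection is the currying (transpose) $g\leftrightarrow h$ determined by $h(x)(z)=g(z)(x)$.

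The only computational point is that, under this transpose, the two Goguen conditions become literally the same inequality. Indeed, $\zeta(z)\leq\alpha\searrow g(z)=\bw_{x\in X}\alpha(x)\rdd g(z)(x)$ holds for all $z$ if and only if $\alpha(x)\with\zeta(z)\leq g(z)(x)$ for all $x,z$, by the residuation law $q\leq p\rdd r\iff p\with q\leq r$ (applied with $p=\alpha(x)$, $q=\zeta(z)$, $r=g(z)(x)$); symmetrically, $\alpha(x)\leq h(x)\swarrow\zeta=\bw_{z\in Z}h(x)(z)\ldd\zeta(z)$ holds for all $x$ if and only if $\alpha(x)\with\zeta(z)\leq h(x)(z)$ for all $x,z$, by $p\leq r\ldd q\iff p\with q\leq r$. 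Since $h(x)(z)=g(z)(x)$, both conditions say exactly that $\alpha(x)\with\zeta(z)\leq g(z)(x)$ for all $x\in X$ and $z\in Z$; hence the transpose $g\mapsto h$ restricts to a bijection between the two hom-sets.

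It then remains to check naturality, which is pure bookkeeping: by the preceding lemma both $\mathscr{P}$ and $\mathscr{P}^\dag$ act on morphisms by precomposition $\lambda\mapsto\lambda\circ f$, and composition in $\setQ^{\rm op}$ is composition in $\setQ$ read backwards, so each naturality square reduces to an evident equality between curried expressions (of the form $g(z)(f(x))=h(x)(z)$ after substitution) with no quantale arithmetic involved; I would write out naturality in $(X,\alpha)$ and leave the symmetric second variable to the reader. As a by-product this identifies the unit $\eta_{(X,\alpha)}\colon(X,\alpha)\to\mathscr{P}\mathscr{P}^\dag(X,\alpha)=\big(\sQ^{\sQ^X},(\alpha_\dag^\uparrow)^\uparrow\big)$ as the transpose of the identity, namely the evaluation map $\eta_{(X,\alpha)}(x)=(\lambda\mapsto\lambda(x))$, which is exactly the unit that will be used for the monad $\mathfrak{P}=\mathscr{P}\mathscr{P}^\dag$ in the sequel (and dually for the counit in $\setQ^{\rm op}$).

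I do not expect a genuine obstacle here; the only thing requiring care is the variance bookkeeping — tracking which category each hom-set lives in and the direction in which $\mathscr{P}^\dag f$ and $\mathscr{P} f$ run — together with the fact that, since $\sQ$ is not assumed commutative, the two sides use \emph{different} implications ($\ldd$ on the $\mathscr{P}$-side, $\rdd$ on the $\mathscr{P}^\dag$-side); it is precisely the residuation adjunction $p\with q\leq r\iff q\leq p\rdd r\iff p\leq r\ldd q$ that forces the two Goguen conditions to coincide, so this coincidence is the real content of the proof.
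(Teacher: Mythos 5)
Your proposal is correct and is essentially the paper's own proof: the paper likewise reduces the adjunction to the observation that $f\colon(X,\alpha)\to(\sQ^Y,\beta^\uparrow)$ is a Goguen map if and only if its transpose $\overline{f}\colon(Y,\beta)\to(\sQ^X,\alpha_\dag^\uparrow)$ is, with both conditions unwinding via residuation to $\alpha(x)\with\beta(y)\leq f(x)(y)$. You spell out the naturality bookkeeping a little more explicitly than the paper does, but the key computation is identical.
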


\begin{proof}It suffices to check that  $$f\colon(X,\alpha)\to(\sQ^Y,\beta^\uparrow)$$ is a Goguen map if and only if so too is its transpose \[\overline{f}\colon(Y,\beta)\to(\sQ^X,\alpha_\dag^\uparrow), \quad \overline{f}(y)(x)=f(x)(y).\]   This is easy since \begin{align*} &\quad\quad\quad \text{$f\colon(X,\alpha)\to(\sQ^Y,\beta^\uparrow)$ is a Goguen map}\\ &\iff \forall x\in X, \forall y\in Y,~ \alpha(x)\leq   f(x)(y)\ldd \beta(y) \\ &\iff \forall x\in X, \forall y\in Y,~ \alpha(x)\with\beta(y)\leq   f(x)(y)  \\ &\iff \forall y\in Y,\forall x\in X, ~ \beta(y)\leq \alpha(x)\rdd \overline{f}(y)(x) \\ &\iff \overline{f}\colon(Y,\beta)\to(\sQ^X,\alpha_\dag^\uparrow)~\text{is a Goguen map}.\qedhere \end{align*} \end{proof}

In the adjunction  $\mathscr{P}^\dag \dashv\mathscr{P}$: \begin{itemize}\item the unit $\eta$ assigns to  each object  $(X,\alpha)$ of $\setQ$ the map \[\eta_{(X,\alpha)}\colon (X,\alpha)\to \mathscr{P} \mathscr{P}^\dag(X,\alpha),\quad \eta_{(X,\alpha)}(x)(\gamma)=\gamma(x);  \]  \item  the counit $\epsilon$ assigns to  each object $(Y,\beta)$ of $\setQ^{\rm op}$  the opposite of $$\epsilon_{(Y,\beta)}\colon (Y,\beta) \to \mathscr{P}^\dag \mathscr{P} (Y,\beta),\quad \epsilon_{(Y,\beta)}(y)(\lambda)=\lambda(y). $$  \end{itemize}

We call the monad defined by the adjunction $$\mathscr{P}^\dag \dashv\mathscr{P}$$  the \emph{double contravariant powerset monad} in $\setQ$ and  denote it by \[\mathfrak{P}=(\mathscr{P} \mathscr{P}^\dag,\mu,\eta).\]

As usual,   we   write $\mathfrak{P}$ for both the  monad $(\mathscr{P} \mathscr{P}^\dag,\mu,\eta)$ and the  functor $\mathscr{P} \mathscr{P}^\dag$. We spell out   the details of the monad $\mathfrak{P}$ for later use. For each object $(X,\alpha)$ of $\setQ$, \begin{itemize}
\item $\mathfrak{P}(X,\alpha)=(\sQ^{\sQ^X},(\alpha_\dag^{\uparrow})^\uparrow)$, where for all $\Lambda\colon\sQ^X\to \sQ$, $$(\alpha_\dag^{\uparrow})^\uparrow(\Lambda)= \bw_{\gamma\in\sQ^X}(\Lambda(\gamma)\ldd(\alpha\searrow\gamma));$$

 \item the   unit $\eta$ assigns to  $(X,\alpha)$ the Goguen map $$\eta_{(X,\alpha)}\colon (X,\alpha)\to (\sQ^{\sQ^X},(\alpha_\dag^{\uparrow})^\uparrow)$$ given by $$\eta_{(X,\alpha)}(x)(\gamma)=\gamma(x)$$ for all $x\in X$ and $\gamma\in\sQ^X$;

\item the   multiplication $\mu$ assigns to $(X,\alpha)$ the Goguen  map $$\mu_{(X,\alpha)}\colon \mathfrak{P}^2(X,\alpha)\to \mathfrak{P}(X,\alpha)$$ given by  \[\mu_{(X,\alpha)}(\mathbb{H})(\gamma) =\mathbb{H}(\widehat{\gamma}),\quad  \widehat{\gamma}(\Lambda)=\Lambda(\gamma)\]  for all $\mathbb{H}\colon \sQ^{\sQ^{\sQ^X}}\to\sQ$, $\gamma\in\sQ^X$  and $\Lambda\in\sQ^{\sQ^X}$.
\end{itemize}

The monad $\mathfrak{P}$  is a lifting of a monad in the category of sets, namely, a lifting of the double contravariant $\sQ$-powerset monad that we describe now.

By the \emph{contravariant $\sQ$-powerset functor} on {\sf Set} we mean the functor \[\exp_\sQ^{-1}\colon {\sf Set}^{\rm op}\to{\sf Set}\] that sends a   map $f\colon X\to Y$ to  $$f^{-1}\colon \sQ^Y\to \sQ^X, \quad \lambda\mapsto \lambda\circ f.$$

The contravariant $\sQ$-powerset functor  $\exp_\sQ^{-1}$ is right adjoint to its opposite \[(\exp_\sQ^{-1})^{\rm op}\colon {\sf Set}\to{\sf Set}^{\rm op}.\]  In the adjunction  $(\exp_\sQ^{-1})^{\rm op} \dashv\exp_\sQ^{-1}$, \begin{itemize}\item  the unit $\eta$ assigns to each set $X$    the   map \[\eta_X\colon X\to \sQ^{\sQ^X},\quad \eta_X(x)(\gamma)=\gamma(x); \] \item the counit $\epsilon$ assigns to each set $Y$ the map $$\epsilon_Y\colon Y \to \sQ^{\sQ^Y},\quad \epsilon_Y(y)(\lambda)=\lambda(y).$$ \end{itemize}

The   monad \[  (\exp_\sQ^{-2},\mu,\eta) \]  defined by the adjunction $(\exp_\sQ^{-1})^{\rm op} \dashv\exp_\sQ^{-1}$ is called the \emph{double contravariant $\sQ$-powerset monad}  (c.f. \cite[Remark 1.2.7]{Hoehle2001}) in {\sf Set}.\footnote{The double contravariant $\sQ$-powerset functor $\exp_\sQ^{-2}$ already appeared in   \cite[page 112]{EG92}.} When $\sQ$ is the Boolean algebra $\{0,1\}$, this monad  is just the double contravariant powerset monad in {\sf Set}.

We spell out details of the monad $(\exp_\sQ^{-2},\mu,\eta)$ for later use:  \begin{itemize}\item the functor $\exp_\sQ^{-2}$ assigns to each  $f\colon X\to Y$   the map $$\exp_\sQ^{-2}f\colon \sQ^{\sQ^X}\to \sQ^{\sQ^Y}$$ given by \begin{equation}\label{P-2f} \exp_\sQ^{-2}f(\Lambda)(\gamma)=\Lambda(\gamma\circ f) \end{equation} for all $\Lambda\in\sQ^{\sQ^X}$ and $\gamma\in\sQ^Y$; \item the unit $\eta$ assigns to each set $X$ the map  $\eta_X\colon X\to \sQ^{\sQ^X}$ given by $\eta_X(x)(\gamma)=\gamma(x);$
\item the multiplication $\mu$ assigns to each set $X$ the map $$\mu_X\colon \exp_\sQ^{-4}(X)\to \exp_\sQ^{-2}(X)$$ given by \begin{equation}\label{def of mu}\mu_X(\mathbb{H})(\gamma) =\mathbb{H}(\widehat{\gamma}),\quad  \widehat{\gamma}(\Lambda)=\Lambda(\gamma)\end{equation}  for all $\mathbb{H}\colon \sQ^{\sQ^{\sQ^X}}\to\sQ$, $\gamma\in\sQ^X$  and $\Lambda\in\sQ^{\sQ^X}$. \end{itemize}

It is clear that  \begin{itemize}\item
the functor  $\mathscr{P}\colon \setQ^{\rm op}\to\setQ$ is a lifting of $\exp_\sQ^{-1}\colon {\sf Set}^{\rm op}\to{\sf Set}$; \item     the functor $\mathscr{P}^\dag\colon \setQ\to\setQ^{\rm op}$ is a lifting of  $(\exp_\sQ^{-1})^{\rm op}\colon {\sf Set}\to{\sf Set}^{\rm op}$;
\item
the multiplication and the unit of   the monad $(\mathfrak{P},\mu,\eta)$ are  lifting of that of the monad $(\exp_\sQ^{-2},\mu,\eta)$, respectively.  \end{itemize}
Therefore,  the monad $(\mathfrak{P},\mu,\eta)$ is a lifting of $(\exp_\sQ^{-2},\mu,\eta)$.

\begin{rem} \begin{enumerate}[label={\rm(\roman*)}] \setlength{\itemsep}{0pt}
\item Though  $\mathscr{P}$ is a lifting of $\exp_\sQ^{-1}$ and $\mathscr{P}^\dag$ is a lifting of  $(\exp_\sQ^{-1})^{\rm op}$, the functor $\mathscr{P}^\dag$ is \emph{not} the opposite of   $\mathscr{P}$ unless the quantale $\sQ$ is commutative.
\item The construction of the adjunction $\mathscr{P}^\dag\dashv\mathscr{P}$, hence that of the monad $\mathfrak{P}$, makes use of the quantale structure of $\sQ$. But, the construction of the adjunction $(\exp_\sQ^{-1})^{\rm op} \dashv\exp_\sQ^{-1}$ does not depend on the quantale structure of $\sQ$; that means, $\sQ$ can be replaced by any nonempty set in this construction.
 \item The functors $\exp_\sQ\colon{\sf Set}\to{\sf Set}$ and  \(\exp_\sQ^{-1}\colon {\sf Set}^{\rm op}\to{\sf Set}\) are closely related to each other. Of particular interest is the following fact for which the verification is left to the reader:   for any pullback square in the category of sets, as displayed on the left,
$$\bfig
\square[A`C`B`D;f`h`j`g] \square(1100,0)[\sQ^C`\sQ^A`\sQ^D`\sQ^B;\exp_\sQ^{-1} f`\exp_\sQ j`\exp_\sQ h
`\exp_\sQ^{-1} g]
\efig$$
the right square is commutative.
In the case that $\sQ$ is the Boolean algebra $2=\{0,1\}$, this fact is just the Beck-Chevalley condition of the category of sets (see e.g. \cite[page 179]{Riehl}).

It should be warned that though  $\mathscr{U}$ is a lifting of $\exp_\sQ$ and $\mathscr{P}$ is a lifting of $\exp_\sQ^{-1}$, the nice connection between $\exp_\sQ$ and $\exp_\sQ^{-1}$ does not carry over.  For instance, it does not make sense to formulate a  square for $\mathscr{U}$ and $\mathscr{P}$ as displayed on the right for $\exp_\sQ$ and $\exp_\sQ^{-1}$, since  $\mathscr{U}$ sends an object $(X,\alpha)$ of $\setQ$  to $(\sQ^X,\alpha^\downarrow)$, while   $\mathscr{P}$ sends it to $(\sQ^X,\alpha^\uparrow)$. \end{enumerate}  \end{rem}

The following theorem implies that  the category of Eilenberg-Moore algebras of the monad $(\mathfrak{P},\mu,\eta)$ is equivalent to the opposite category of $\setQ$, hence  $\setQ$ is dually monadic over itself.

\begin{thm}The functor $\mathscr{P}\colon \setQ^{\rm op}\to\setQ $ is monadic. \end{thm}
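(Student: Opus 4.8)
The plan is to apply Beck's monadicity theorem to the functor $\mathscr{P}\colon\setQ^{\rm op}\to\setQ$, whose left adjoint is $\mathscr{P}^\dag\colon\setQ\to\setQ^{\rm op}$ by Proposition \ref{adjunction P Pdagger}. Beck's theorem requires three things: that $\mathscr{P}$ has a left adjoint (done), that $\mathscr{P}$ reflects isomorphisms, and that $\setQ^{\rm op}$ has, and $\mathscr{P}$ preserves, coequalizers of $\mathscr{P}$-split pairs. Equivalently, since $\setQ^{\rm op}$-coequalizers are $\setQ$-equalizers, we must show $\setQ$ has equalizers of the relevant cofork diagrams and that $\mathscr{P}$ carries them to equalizers in $\setQ$.

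\textbf{Reflecting isomorphisms.} First I would show that a Goguen map $f\colon(X,\alpha)\to(Y,\beta)$ with $\mathscr{P} f$ an isomorphism in $\setQ$ is itself an isomorphism (in $\setQ^{\rm op}$, i.e.\ in $\setQ$). The underlying-set functor is transparent here: $\mathscr{P}$ lifts $\exp_\sQ^{-1}$, and $\exp_\sQ^{-1}f=(-\circ f)\colon\sQ^Y\to\sQ^X$ is a bijection iff $f$ is a bijection (injectivity of $f$ makes $-\circ f$ surjective; surjectivity of $f$ makes it injective; conversely, evaluating $-\circ f$ on the maps $r_y$ recovers $f$). So $f$ is a bijection of carriers. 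It remains to see that $f^{-1}$ is a Goguen map, i.e.\ $\beta\le\alpha\circ f^{-1}$; this follows because $\mathscr{P} f$ being an \emph{iso} in $\setQ$ forces $(\mathscr{P} f)^{-1}=\mathscr{P}(f^{-1})$ to be a Goguen map too, and unwinding the defining inequalities $\lambda\swarrow\beta\le(\lambda\circ f)\swarrow\alpha$ for all $\lambda$ (take $\lambda=\beta$, use $k\le\beta\swarrow\beta$) pins down $\beta\le\alpha\circ f^{-1}$.

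\textbf{Equalizers.} Next I would recall that $\setQ$ is complete: the equalizer of $f,g\colon(X,\alpha)\rightrightarrows(Y,\beta)$ is $(E,\alpha|_E)$ with $E=\{x:f(x)=g(x)\}$ and the inclusion. Thus $\setQ^{\rm op}$ has coequalizers. The crux is that $\mathscr{P}$ \emph{preserves} equalizers of $\mathscr{P}$-split pairs — in fact I expect it preserves all equalizers that exist, because the same is true downstairs: $\exp_\sQ^{-1}=(-)^{\rm op}\circ\text{(contravariant hom into }\sQ)$ sends colimits in $\Set$ to limits in $\Set$, and a coequalizer $q\colon Y\twoheadrightarrow Q$ in $\Set$ becomes the equalizer $q^{-1}\colon\sQ^Q\hookrightarrow\sQ^Y$ of the two maps $\sQ^Y\rightrightarrows\sQ^R$ induced by the kernel pair. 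Since $\mathscr{P}$ lifts $\exp_\sQ^{-1}$ and the forgetful functor $U\colon\setQ\to\Set$ creates limits, once the underlying diagram is an equalizer in $\Set$ I only need the fuzzy-structure condition to match, which is immediate from the formula $\alpha^\uparrow(\gamma)=\gamma\swarrow\alpha$ and the fact that the equalizer object in $\setQ$ carries the restricted/subspace membership function.

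\textbf{Main obstacle.} The delicate point is bookkeeping the variance: a ``$\mathscr{P}$-split coequalizer'' in $\setQ^{\rm op}$ is a $\mathscr{P}$-split \emph{equalizer} diagram in $\setQ$, i.e.\ a pair $u,v\colon(Y,\beta)\rightrightarrows(Z,\gamma)$ with a common section $e\colon(X,\alpha)\to(Y,\beta)$ and splitting data $s,t$ in $\setQ$ downstairs satisfying the split-equalizer identities; I must check that $\mathscr{P}$ turns this into a genuine split coequalizer in $\setQ$, which then \emph{is} an absolute colimit and hence automatically preserved and reflected. Because split equalizers are absolute limits, $\mathscr{P}$ (being a functor) sends them to split coequalizers, so the coequalizer exists in $\setQ^{\rm op}$ and $\mathscr{P}$ preserves it. So the only real content is (a) reflection of isomorphisms, handled above, and (b) confirming that the $\setQ^{\rm op}$-coequalizer of a $\mathscr{P}$-split pair is computed as claimed and sits correctly over $\Set$ — both of which reduce to the already-recorded facts that $\mathscr{P}$ lifts $\exp_\sQ^{-1}$, that $\exp_\sQ^{-1}\colon\Set^{\rm op}\to\Set$ is monadic (stated in the excerpt for $\sQ=2$; the general argument is identical since the construction does not use the quantale structure, only that $\sQ$ is a nonempty set), and that $U\colon\setQ\to\Set$ is faithful and creates the relevant limits. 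Assembling these, Beck's criterion is met and $\mathscr{P}$ is monadic.
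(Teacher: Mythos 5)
Your overall strategy---Beck's monadicity theorem applied to the adjunction $\mathscr{P}^\dag\dashv\mathscr{P}$ of Proposition \ref{adjunction P Pdagger}, with the two tasks being reflection of isomorphisms and the coequalizer condition---matches the paper's, and your isomorphism-reflection argument is essentially the paper's up to a slip: substituting $\lambda=\beta$ into $\lambda\swarrow\beta\leq(\lambda\circ f)\swarrow\alpha$ only recovers $\alpha\leq\beta\circ f$, which is the Goguen condition you already have. To pin down $\beta\leq\alpha\circ f^{-1}$ you must substitute $\lambda=\alpha\circ f^{-1}$ into the \emph{equality} $\lambda\swarrow\beta=(\lambda\circ f)\swarrow\alpha$ (equivalently, use the Goguen condition of the inverse of $\mathscr{P}f$ at $\lambda=\alpha$). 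That is a fixable detail.

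The genuine gap is in the preservation step, which is where the entire content of the theorem lies. For a $\mathscr{P}$-split pair $u,v$ in $\setQ^{\rm op}$, the splitting data $s,t$ are morphisms in the \emph{codomain} $\setQ$ between objects of the form $\mathscr{P}(Y,\beta)$; they are not morphisms of $\setQ^{\rm op}$ and need not be of the form $\mathscr{P}(\text{--})$. Hence $u,v$ is not itself a split pair in the domain of $\mathscr{P}$, and absoluteness of split coequalizers gives you nothing: the claim that ``$\mathscr{P}$, being a functor, sends them to split coequalizers'' is exactly the assertion to be proved, not a consequence of functoriality. (If that argument were valid, every right adjoint whose domain has the relevant coequalizers would be monadic.) The ``downstairs'' fact you invoke is also the wrong variance: that $\exp_\sQ^{-1}$ turns coequalizers of sets into equalizers is just right-adjoint preservation of limits, whereas what is needed is that certain \emph{equalizers} in $\setQ$ are carried to \emph{coequalizers} in $\setQ$. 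The paper (using the reflexive rather than the split form of the theorem) does the actual work: it forms the equalizer $(Z,\gamma)$ of the coreflexive pair $f,g$ with $Z=\{x\mid f(x)=g(x)\}$ and $\gamma=\alpha|Z$; uses the common retraction $h$ to show $f,g$ are injective and satisfy $f(x_1)=g(x_2)\Rightarrow x_1=x_2$, whence any $\xi_1,\xi_2\in\sQ^X$ agreeing on $Z$ arise as $\mathscr{P}f(\xi)$ and $\mathscr{P}g(\xi)$ for a single $\xi\in\sQ^Y$ and so are identified by any cocone $d$; and then constructs the factorization explicitly through the extension operator $E(\zeta)$, equal to $\zeta$ on $Z$ and to $1$ off $Z$, verifying that $E\colon\mathscr{P}(Z,\gamma)\to\mathscr{P}(X,\alpha)$ is a Goguen map because $1\ldd\alpha(x)=1$. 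None of this is created for free by the forgetful functor, and your proposal contains no substitute for it.
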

\begin{proof}We apply the ``reflexive tripleability theorem'' (see e.g. \cite[Proposition 5.5.8]{Riehl})  to prove the conclusion. Since $\setQ$ is a complete category,    we only need to show that  the functor $$\mathscr{P}\colon \setQ^{\rm op}\to\setQ $$
reflects isomorphisms and preserves coequalizers of reflexive pairs.\footnote{A parallel pair of morphisms $r,s: A\to B$ in a category is reflexive if they have a common right inverse; that is, there is a morphism $i: B\to A$ such that $r\circ i=1_B=s\circ i$.}

Suppose that $f\colon(X,\alpha)\to(Y,\beta)$ is a Goguen map such that \[\mathscr{P}f\colon (\sQ^Y,\beta^\uparrow)\to (\sQ^X,\alpha^\uparrow),\quad \lambda\mapsto \lambda\circ f\] is an isomorphism in $\setQ$. Then $f$ is a bijection and  \[ \lambda\swarrow\beta=(\lambda\circ f)\swarrow\alpha \] for all $\lambda\in\sQ^Y$.  Putting $\lambda=\alpha\circ f^{-1}$ gives that $\beta\leq\alpha\circ f^{-1}$, hence $\beta\circ f\leq\alpha$ and consequently $\beta\circ f=\alpha$. Therefore, $f$ is an isomorphism in $\setQ$, hence an isomorphism in $\setQ^{\rm op}$. This proves that $\mathscr{P}$ reflects isomorphisms.

Now we show that $\mathscr{P}$ preserves coequalizers of reflexive pairs. Consider a coequalizer in $\setQ^{\rm op}$ of a   reflexive pair; this means we have an equalizer  \[\bfig  \morphism(0,0)<500,0>[ (Z,\gamma)`(X,\alpha);e]\morphism(500,0)|a|/@{->}@<3pt>/<500,0>[(X,\alpha) `(Y,\beta); f]
\morphism(500,0)|r|/@{->}@<-3pt>/<500,0>[(X,\alpha) `(Y,\beta); g]\efig\]  in  $\setQ$ together with a Goguen map $h\colon (Y,\beta)\to (X,\alpha)$ such that both $h\circ f$ and $h\circ g$ are the identity map on $(X,\alpha)$. We wish to prove that \[\bfig \morphism(0,0)|a|/@{->}@<3pt>/<600,0>[\mathscr{P}(Y,\beta) `\mathscr{P}(X,\alpha); \mathscr{P}f]
\morphism(0,0)|r|/@{->}@<-3pt>/<600,0>[\mathscr{P}(Y,\beta) `\mathscr{P}(X,\alpha); \mathscr{P}g] \morphism(600,0)<600,0>[ \mathscr{P}(X,\alpha)`\mathscr{P}(Z,\gamma);\mathscr{P}e]\efig\]   is a coequalizer in $\setQ$.

Since $h$ is a common left inverse for $f$ and $g$, then \begin{itemize}\item both $f$ and $g$ are injective; \item $\beta\circ f=\alpha=\beta\circ g$; \item for all $x_1,x_2\in X$,   $f(x_1)=g(x_2)\implies x_1=x_2$.\end{itemize}

Since $e\colon(Z,\gamma)\to(X,\alpha)$ is an equalizer of $f$ and $g$,  we may identify $Z$ with the subset $$\{x\in X\mid f(x)=g(x)\}$$ of $X$ and identify $\gamma$ with the   restriction of $\alpha$ on $Z$; that is, $\gamma=\alpha|Z$.

Suppose that $d\colon \mathscr{P}(X,\alpha)\to (W,\lambda)$ is a Goguen map such that $d\circ \mathscr{P}f= d\circ \mathscr{P}g$. We need to show that there is a unique Goguen map $$\overline{d}\colon \mathscr{P}(Z,\gamma)\to(W,\lambda)$$ satisfying $d= \overline{d}\circ \mathscr{P}e$. Uniqueness is obvious since $\mathscr{P}e$ is an epimorphism.

Before proving the existence of $\overline{d}$, we show that for all $\xi_1,\xi_2\in\sQ^X$, \[\xi_1|Z=\xi_2|Z\implies d(\xi_1)=d(\xi_2).\]  To see this, define $\xi\in\sQ^Y$ by \[\xi(y)=\begin{cases}
\xi_1(x) &   y=f(x) ~ \text{for some}~ x\in X, \\ \xi_2(x) &  y=g(x)~ \text{for some}~ x\in X,\\ 1 & {\rm otherwise}.\end{cases}\] That $\xi$ is well-defined follows from that $\xi_1|Z=\xi_2|Z$ and the aforementioned facts about $f$ and $g$.
Since $\mathscr{P}f(\xi)=\xi_1$ and $\mathscr{P}g(\xi)=\xi_2$, it follows that $d(\xi_1)=d(\xi_2)$, as desired.

For each $\zeta\in \sQ^Z$, define $E(\zeta)\in\sQ^X$ by \[E(\zeta)(x)=\begin{cases}\zeta(x) & x\in Z,\\ 1 & x\notin Z.\end{cases}\]
 Then $E\colon \mathscr{P}(Z,\gamma)\to\mathscr{P}(X,\alpha)$ is a Goguen map, because for each $\zeta\in\sQ^Z$,  \[\gamma^\uparrow(\zeta)= \bw_{x\in Z}\zeta(x)\ldd\gamma(x)= \bw_{x\in X}E(\zeta)(x)\ldd\alpha(x) = \alpha^\uparrow(E(\zeta)).\]

Let $\overline{d}=d\circ E$. We claim that $\overline{d}$ satisfies the requirement. For each $\xi\in\sQ^X$, since  the restrictions of  $E\circ\mathscr{P}e(\xi)$  and $\xi$ on $Z$ are equal, i.e., $(E\circ\mathscr{P}e(\xi)) |Z=\xi|Z,$  it follows that \[\overline{d}\circ\mathscr{P}e(\xi)=d(E\circ\mathscr{P}e(\xi))= d(\xi),\]  which completes the proof.
\end{proof}





Next, we show that  for a commutative quantale, the (covariant) powerset monad $(\mathscr{U},\sfm,\sfe)$ is a submonad of the double contravariant powerset monad  $(\mathfrak{P},\mu,\eta)$. 

\begin{lem}\label{P to P-2}
For each set $X$, the map $$j_X\colon \sQ^X\to\sQ^{\sQ^X},\quad j_X(\lambda)(\gamma)=\gamma\swarrow\lambda$$ is injective. The assignment $X\mapsto j_X$ defines a natural transformation from $\exp_\sQ$ to $\exp_\sQ^{-2}$.
\end{lem}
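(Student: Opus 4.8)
The plan is to establish the two assertions independently, each by a short direct computation; the only genuine choice is the family of test functions used for injectivity, and naturality turns out to be immediate from the material already recorded.

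\emph{Injectivity.} The idea is to reconstruct $\lambda$ pointwise out of the single function $j_X(\lambda)\in\sQ^{\sQ^X}$. For $x\in X$ and $r\in\sQ$, let $\gamma_{x,r}\in\sQ^X$ be the function with $\gamma_{x,r}(x)=r$ and $\gamma_{x,r}(z)=1$ for $z\neq x$ (the crucial point is to use the top element $1$ off the distinguished point, not $0$). Since $1\ldd q=\bv\sQ=1$ for every $q\in\sQ$, one computes
\[j_X(\lambda)(\gamma_{x,r})=\gamma_{x,r}\swarrow\lambda=\bw_{z\in X}\gamma_{x,r}(z)\ldd\lambda(z)=r\ldd\lambda(x).\]
By Proposition \ref{prop of impli}(i), $k\le r\ldd\lambda(x)$ if and only if $\lambda(x)\le r$, whence
\[\lambda(x)=\bw\{r\in\sQ\mid k\le j_X(\lambda)(\gamma_{x,r})\}\]
is completely determined by $j_X(\lambda)$. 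Consequently $j_X(\lambda)=j_X(\lambda')$ forces $\lambda=\lambda'$, so $j_X$ is injective.

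\emph{Naturality.} Given a map $f\colon X\to Y$, I must check that the square with $j_X,j_Y$ horizontal and $\exp_\sQ f,\exp_\sQ^{-2}f$ vertical commutes, i.e., $\exp_\sQ^{-2}f\circ j_X=j_Y\circ\exp_\sQ f$ as maps $\sQ^X\to\sQ^{\sQ^Y}$. Fix $\lambda\in\sQ^X$ and $\gamma\in\sQ^Y$. Using the formula \eqref{P-2f} for $\exp_\sQ^{-2}f$ together with $\gamma\circ f=f^{-1}(\gamma)$,
\[(\exp_\sQ^{-2}f\circ j_X)(\lambda)(\gamma)=j_X(\lambda)(\gamma\circ f)=(\gamma\circ f)\swarrow\lambda=f^{-1}(\gamma)\swarrow\lambda,\]
while on the other side
\[(j_Y\circ\exp_\sQ f)(\lambda)(\gamma)=j_Y(f(\lambda))(\gamma)=\gamma\swarrow f(\lambda).\]
These agree by Lemma \ref{image vs preimage}(ii), which is precisely the identity $\beta\swarrow f(\alpha)=f^{-1}(\beta)\swarrow\alpha$ with $\alpha=\lambda$ and $\beta=\gamma$. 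Hence the naturality square commutes and $j=\{j_X\}_X\colon\exp_\sQ\Rightarrow\exp_\sQ^{-2}$ is a natural transformation.

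Both parts are essentially mechanical once set up; the one place that rewards a little care is the choice of the test functions $\gamma_{x,r}$ in the injectivity argument — putting $1$ away from $x$ makes the irrelevant meet-factors $1\ldd\lambda(z)$ collapse, whereas the tempting choice of $0$ there would leave the useless factors $0\ldd\lambda(z)$ and recover nothing. After that, injectivity is just the residuation adjunction (Proposition \ref{prop of impli}(i)), and naturality is a one-line consequence of Lemma \ref{image vs preimage}(ii); note that commutativity of $\sQ$ is not needed for this lemma, although it will be for the subsequent comparison of the two monads.
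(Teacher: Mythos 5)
Your proof is correct and follows essentially the same route as the paper: naturality is exactly the paper's one-line computation via Lemma \ref{image vs preimage}(ii) and equation \eqref{P-2f}. For injectivity the paper simply says it is clear; your test functions $\gamma_{x,r}$ (with the top element off the distinguished point, so that $1\ldd q=1$ collapses the irrelevant factors) supply a correct and complete justification of that claim.
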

\begin{proof}That $j_X$ is injective is clear. It remains to check that $\{j_X\}_X$ is a natural transformation; that is, for each map $f\colon X\to Y$, the square \[\bfig\square<575,525>[\exp_\sQ X`\exp_\sQ^{-2} X` \exp_\sQ Y`\exp_\sQ^{-2} Y;j_X`\exp_\sQ f` \exp_\sQ^{-2} f`j_Y]\efig\] is commutative. This is easy since for all $\lambda\in\sQ^X$ and $\gamma\in\sQ^Y$, by Lemma \ref{image vs preimage} and equation (\ref{P-2f}) we have   \begin{align*}j_Y\circ\exp_\sQ f(\lambda)(\gamma)&=\gamma\swarrow f(\lambda) =\gamma\circ f\swarrow\lambda   =  \exp_\sQ^{-2}f(j_X(\lambda))(\gamma). \qedhere \end{align*} \end{proof}

If $\sQ$ is commutative, then for all set $X$ and all $\lambda,\gamma\in\sQ^X$,  \[\lambda\searrow\gamma=\bw_{x\in X}(\lambda(x)\ra\gamma(x))= \gamma\swarrow\lambda.\]  In this case  we  write $$\sub_X(\lambda,\gamma)\coloneqq\lambda\searrow\gamma=\gamma\swarrow\lambda = j_X(\lambda)(\gamma).$$

\begin{thm}\label{P is submonad} Let $\sQ$ be a commutative quantale. Then the powerset monad $(\mathscr{U},\sfm,\sfe)$ is a submonad of the double contravariant powerset monad $(\mathfrak{P},\mu,\eta)$. \end{thm}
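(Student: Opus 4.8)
The plan is to promote the natural transformation $j=\{j_X\}_X\colon\exp_\sQ\to\exp_\sQ^{-2}$ supplied by Lemma \ref{P to P-2} to a monad map $\bar{j}\colon(\mathscr{U},\sfm,\sfe)\to(\mathfrak{P},\mu,\eta)$ in $\setQ$ with injective components. Since $(\mathscr{U},\sfm,\sfe)$ is a lifting of $(\exp_\sQ,\sfm,\sfe)$, $(\mathfrak{P},\mu,\eta)$ is a lifting of $(\exp_\sQ^{-2},\mu,\eta)$, and the forgetful functor $U\colon\setQ\to\Set$ is faithful, it suffices to establish two facts: \textbf{(a)} for every object $(X,\alpha)$ of $\setQ$ the map $j_X$ satisfies the Goguen condition $j_X\colon\mathscr{U}(X,\alpha)\to\mathfrak{P}(X,\alpha)$; and \textbf{(b)} $j$ is a monad map from $(\exp_\sQ,\sfm,\sfe)$ to $(\exp_\sQ^{-2},\mu,\eta)$ in $\Set$, i.e.\ $j\circ\sfe=\eta$ and $j\circ\sfm=\mu\circ(j*j)$. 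Granting (a), the family $\bar{j}=\{j_X\}$ is a natural transformation $\mathscr{U}\to\mathfrak{P}$ in $\setQ$, because both legs of each naturality square are Goguen maps and applying $U$ produces the square of Lemma \ref{P to P-2}, which commutes; faithfulness of $U$ then forces commutativity in $\setQ$. Granting (b), the monad-map triangle and square for $\bar{j}$ commute in $\setQ$ for the same reason. Finally, each $j_X$ is injective by Lemma \ref{P to P-2}, so $\bar{j}$ is a monomorphism of functors, exhibiting $(\mathscr{U},\sfm,\sfe)$ as a submonad of $(\mathfrak{P},\mu,\eta)$.

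For (a), recall that $\mathscr{U}(X,\alpha)=(\sQ^X,\alpha^\downarrow)$ with $\alpha^\downarrow(\lambda)=\alpha\swarrow\lambda$, that $\mathfrak{P}(X,\alpha)=(\sQ^{\sQ^X},(\alpha_\dag^\uparrow)^\uparrow)$ with $(\alpha_\dag^\uparrow)^\uparrow(\Lambda)=\bw_{\gamma\in\sQ^X}\Lambda(\gamma)\ldd(\alpha\searrow\gamma)$, and that $j_X(\lambda)(\gamma)=\gamma\swarrow\lambda$. Hence the Goguen condition $\alpha^\downarrow\le(\alpha_\dag^\uparrow)^\uparrow\circ j_X$ reads
\[\alpha\swarrow\lambda\ \le\ \bw_{\gamma\in\sQ^X}(\gamma\swarrow\lambda)\ldd(\alpha\searrow\gamma)\qquad(\lambda\in\sQ^X),\]
and by the residuation law $p\le r\ldd q\iff p\with q\le r$ this is equivalent to requiring $(\alpha\swarrow\lambda)\with(\alpha\searrow\gamma)\le\gamma\swarrow\lambda$ for each $\gamma$. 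In the commutative notation this is precisely the transitivity law $\sub_X(\lambda,\alpha)\with\sub_X(\alpha,\gamma)\le\sub_X(\lambda,\gamma)$ for the ``fuzzy inclusion'' on $\sQ^X$, and it follows, argument by argument over $x\in X$, from Proposition \ref{prop of impli}(vi). This is the one place where commutativity of $\sQ$ is genuinely used: for a general quantale the rearrangement $q\with(p\with\lambda(x))\le q\with\alpha(x)=\alpha(x)\with q$ underlying this inequality has no substitute.

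For (b), the unit law is immediate, since $j_X(k_x)(\gamma)=\gamma\swarrow k_x=\gamma(x)=\eta_X(x)(\gamma)$ for all $x\in X$ and $\gamma\in\sQ^X$, using $\gamma(x)\ldd k=\gamma(x)$ (Proposition \ref{prop of impli}(ii)) and $r\ldd 0=1$. The multiplication law $j_X\circ\sfm_X=\mu_X\circ(j*j)_X$ is the main computation, and I expect it to be the only genuine obstacle. For the left-hand side, at $\Lambda\in\sQ^{\sQ^X}$ and $\xi\in\sQ^X$ one starts from $j_X(\sfm_X(\Lambda))(\xi)=\xi\swarrow\sfm_X(\Lambda)=\bw_{x}\xi(x)\ldd\bv_{\gamma}(\Lambda(\gamma)\with\gamma(x))$ and, using the identities $a\ldd\big(\bv_i b_i\big)=\bw_i(a\ldd b_i)$, $\big(\bw_i a_i\big)\ldd b=\bw_i(a_i\ldd b)$ and Proposition \ref{prop of impli}(v), rewrites it as $\bw_{\gamma\in\sQ^X}(\xi\swarrow\gamma)\ldd\Lambda(\gamma)$. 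For the right-hand side, one expresses the horizontal composite as $(j*j)_X=\exp_\sQ^{-2}(j_X)\circ j_{\sQ^X}$, applies the precomposition formula (\ref{P-2f}) and the definition (\ref{def of mu}) of $\mu_X$, and again obtains $\bw_{\gamma\in\sQ^X}(\xi\swarrow\gamma)\ldd\Lambda(\gamma)$; so the two sides coincide. The only subtlety here is bookkeeping: one must keep track of the four nested exponentials $\sQ^X,\sQ^{\sQ^X},\sQ^{\sQ^{\sQ^X}},\sQ^{\sQ^{\sQ^{\sQ^X}}}$ that occur inside $(j*j)_X$ and $\mu_X$. Everything else — naturality, the monad-map axioms for $\bar{j}$, and the compatibility with $U$ — is taken care of by the lifting formalism of Proposition \ref{lifting of monad} together with the faithfulness of $U$.
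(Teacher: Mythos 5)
Your proposal is correct and follows essentially the same route as the paper: lift the natural transformation $j$ of Lemma \ref{P to P-2} to a monomorphic natural transformation $\kappa\colon\mathscr{U}\to\mathfrak{P}$ by checking the Goguen condition componentwise, and then verify the monad-map triangle and square at the level of $\Set$, invoking faithfulness of the forgetful functor. The only (harmless) local divergence is in the Goguen condition: you derive the inequality $\alpha^\downarrow(\lambda)\le(\alpha_\dag^\uparrow)^\uparrow(j_X(\lambda))$ directly from the transitivity law $\sub_X(\lambda,\alpha)\with\sub_X(\alpha,\gamma)\le\sub_X(\lambda,\gamma)$, whereas the paper establishes the stronger equality $\alpha^\downarrow(\lambda)=(\alpha_\dag^\uparrow)^\uparrow(j_X(\lambda))$ by showing that $(\alpha_\dag^\uparrow)^\uparrow(\Lambda)=\Lambda(\alpha)$ for every $\Lambda$ preserving the fuzzy inclusion order.
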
 

\begin{proof} We prove the conclusion in two steps.

\textbf{Step 1}.  For each  object $(X,\alpha)$ of $\setQ$ and each $\lambda\in\sQ^X$, \(\alpha^\downarrow(\lambda)=  (\alpha_\dag^{\uparrow})^\uparrow(j_X(\lambda)).\)

Since $$\alpha^\downarrow(\lambda)=\sub_X(\lambda,\alpha)= j_X(\lambda)(\alpha) $$  and $$\sub_X(\gamma_1,\gamma_2)\leq j_X(\lambda)(\gamma_1)\ra j_X(\lambda)(\gamma_2)$$ for all $\gamma_1,\gamma_2\in\sQ^X$, it suffices to show that  $$ (\alpha_\dag^{\uparrow})^\uparrow(\Lambda)=\Lambda(\alpha)  $$ whenever $\Lambda\colon\sQ^X\to\sQ$  satisfies   $$\sub_X(\gamma_1,\gamma_2)\leq \Lambda(\gamma_1)\ra\Lambda(\gamma_2). $$ 

Since $\sQ$ is commutative, by definition we have \begin{align*}(\alpha_\dag^{\uparrow})^\uparrow(\Lambda) &= \bw_{\gamma\in\sQ^X}\sub_X(\alpha,\gamma)\ra\Lambda(\gamma).  \end{align*} Since $\sub_X(\alpha,\alpha) \geq k$, then \[(\alpha_\dag^{\uparrow})^\uparrow(\Lambda)\leq k\ra \Lambda(\alpha)  = \Lambda(\alpha).\] Conversely, since \[\sub_X(\alpha,\gamma)\ra\Lambda(\gamma) \geq (\Lambda(\alpha)\ra\Lambda(\gamma)) \ra\Lambda(\gamma) \geq \Lambda(\alpha)\] for all $\gamma\in\sQ^X$,   then  \begin{align*}(\alpha_\dag^{\uparrow})^\uparrow(\Lambda) &= \bw_{\gamma\in\sQ^X}\sub_X(\alpha,\gamma)\ra\Lambda(\gamma)\geq \Lambda(\alpha).  \end{align*}

\textbf{Step 2}. $(\mathscr{U},\sfm,\sfe)$ is a submonad of  $(\mathfrak{P},\mu,\eta)$.

By  \textbf{Step 1} one sees that for each $(X,\alpha)$ of $\setQ$, the map \[\kappa_{(X,\alpha)}\colon \mathscr{U}(X,\alpha)\to \mathfrak{P}(X,\alpha), \quad \lambda\mapsto \sub_X(\lambda,-)\] satisfies the Goguen condition, hence $\kappa=\{\kappa_{(X,\alpha)}\}$ is a natural transformation from $\mathscr{U}$ to  $\mathfrak{P}$, and it is a lifting of the natural transformation $j=\{j_X\}$ in the above lemma.

It is clear that, as a morphism between functors,    $\kappa$ is a monomorphism and  $\eta=\kappa\circ\sfe$. So, to see that $(\mathscr{U},\sfm,\sfe)$ is a submonad of $(\mathfrak{P},\mu,\eta)$, we only need to show that the square \[\bfig  \square[\mathscr{U}^2`\mathfrak{P}^2` \mathscr{U}`\mathfrak{P};\kappa*\kappa`\sfm` \mu`\kappa]\efig\] is commutative. 
Since $\kappa$  is a lifting of  $j$, it suffices to show that for each set $X$, the following square  is commutative: \[\bfig\square<650,550>[\exp_\sQ^2X`\exp_\sQ^{-4}X` \exp_\sQ X`\exp_\sQ^{-2} X;(j*j)_X`\sfm_X` \mu_X`j_X]\efig\] For this we calculate:   for all $\Lambda\in\sQ^{\sQ^X}$ and   $\lambda\in\sQ^X$,
\begin{align*}\mu_X\circ(j*j)_X(\Lambda)(\lambda)&= \mu_X\circ j_{\sQ^{\sQ^X}}\circ \exp_\sQ j_X (\Lambda)(\lambda) 
\\ &=j_{\sQ^{\sQ^X}}(j_X (\Lambda))(\widehat{\lambda}) 
\\ &= \bw_{\Xi\in \sQ^{\sQ^X}} j_X (\Lambda)(\Xi)\ra \Xi(\lambda) 
\\ &= \bw_{\gamma\in\sQ^X}\Lambda(\gamma)\ra \sub_X(\gamma,\lambda) 
\\ &= \sub_X\Big(\bv_{\gamma\in\sQ^X}\Lambda(\gamma)\with\gamma, \lambda\Big) \\ &= j_X\circ\sfm_X(\Lambda)(\lambda). \qedhere \end{align*}  \end{proof}
 
The fact that the monad $(\mathfrak{P},\mu,\eta)$ is a lifting of  $(\exp_\sQ^{-2},\mu,\eta)$ is useful. As an application, we describe here a submonad of $(\mathfrak{P},\mu,\eta)$  by lifting the $\sQ$-filter monad $\FQ$ in the category of sets, the latter is a submonad of  $(\exp_\sQ^{-2},\mu,\eta)$. The same idea can be used to construct some other  monads in $\setQ$.

The following definition is a slight modification of that of  $\sQ$-filter  in \cite{EG92,Hoehle2001,LZZ2021}.

\begin{defn} \label{Q-filter} A $\sQ$-filter   on a  set $X$ is a  map $F\colon \sQ^X\to \sQ$ subject to the following conditions:  for all    $\lam,\gamma\in \sQ^X$, \begin{enumerate}[label=(F\arabic*)] \setlength{\itemsep}{0pt}
\item \label{FF1} $F(k_X)\geq k$, where $k_X$ is the constant map $X\to\sQ$ with value $k$; \item \label{FF2} $F(\lam)\wedge F(\gamma)\leq F(\lam\wedge\gamma)$; \item \label{FF3} $\gamma\swarrow\lambda  \leq F(\gamma)\ldd F(\lam)$.
 \item\label{FF4} $F(r_X)\leq r$  for all $r\in\sQ$, where $r_X$ is the constant map $X\to\sQ$ with value $r$.   \end{enumerate}
\end{defn}

In  presence of \ref{FF3}, the inequalities in \ref{FF2} and \ref{FF4} are actually   equalities.

For each set $X$, write $$\FQ(X)$$  for the set of $\sQ$-filters on $X$. For each $f\colon X\to Y$ and each $F\in\FQ(X)$, define \[f(F)\colon\sQ^Y\to\sQ\]   by \[f(F)(\gamma)=F(\gamma\circ f).\] Then $f(F)$ is a $\sQ$-filter on $Y$. In this way we obtain a functor \[\FQ\colon{\sf Set}\to{\sf Set}.\]

The $\sQ$-filter functor $\FQ$ is a subfunctor of   $\exp_\sQ^{-2}\colon{\sf Set}\to{\sf Set}$,   indeed, it can be made into a submonad of   $(\exp_\sQ^{-2}, \mu,\eta)$, as we see below.

\begin{lem}\label{closed under multiplication}
For each $\sQ$-filter $\mathbb{F}$ on   $\FQ(X)$, the map \[\sigma(\mathbb{F})\colon\sQ^X\to\sQ, \quad \sigma(\mathbb{F})(\lam)=\mathbb{F}(\widehat{\lam})  \] is a $\sQ$-filter on $X$, where $\widehat{\lam}\colon\FQ(X)\to\sQ$ is   given by \(\widehat{\lam}(F)= F(\lam). \)   \end{lem}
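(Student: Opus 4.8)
The plan is to verify the four defining conditions (F1)--(F4) for $\sigma(\mathbb{F})$ directly, exploiting the fact that $\widehat{(-)}\colon \sQ^X\to\sQ^{\FQ(X)}$ is ``evaluation'', which is order-preserving and interacts nicely with the operations on fuzzy sets. The key observation underlying every step is the following translation: for $\lambda,\gamma\in\sQ^X$ one has $\widehat{k_X}=k_{\FQ(X)}$ by (F1) applied pointwise on $\FQ(X)$ — wait, more carefully: $\widehat{k_X}(F)=F(k_X)$, which need not equal $k$ but satisfies $\widehat{k_X}\geq k_{\FQ(X)}$ by (F1); dually $\widehat{r_X}(F)=F(r_X)\leq r$ by (F4), so $\widehat{r_X}\leq r_{\FQ(X)}$. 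Also $\widehat{\lambda\wedge\gamma}\geq\widehat{\lambda}\wedge\widehat{\gamma}$ by (F2), and the crucial ``contraction'' inequality $\gamma\swarrow\lambda\leq \widehat{\gamma}\swarrow\widehat{\lambda}$ holds because, for each $F\in\FQ(X)$, condition (F3) for $F$ gives $\gamma\swarrow\lambda\leq F(\gamma)\ldd F(\lambda)=\widehat{\gamma}(F)\ldd\widehat{\lambda}(F)$, and taking the meet over all $F$ yields exactly $\widehat{\gamma}\swarrow\widehat{\lambda}$.

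With these four observations in hand, the verifications are routine monotonicity arguments. For (F1): $\sigma(\mathbb{F})(k_X)=\mathbb{F}(\widehat{k_X})\geq\mathbb{F}(k_{\FQ(X)})\geq k$, using $\widehat{k_X}\geq k_{\FQ(X)}$, monotonicity of $\mathbb{F}$ (which follows from (F3) for $\mathbb{F}$), and (F1) for $\mathbb{F}$. For (F4): $\sigma(\mathbb{F})(r_X)=\mathbb{F}(\widehat{r_X})\leq\mathbb{F}(r_{\FQ(X)})\leq r$ by $\widehat{r_X}\leq r_{\FQ(X)}$, monotonicity, and (F4) for $\mathbb{F}$. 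For (F2): $\sigma(\mathbb{F})(\lambda)\wedge\sigma(\mathbb{F})(\gamma)=\mathbb{F}(\widehat{\lambda})\wedge\mathbb{F}(\widehat{\gamma})\leq\mathbb{F}(\widehat{\lambda}\wedge\widehat{\gamma})\leq\mathbb{F}(\widehat{\lambda\wedge\gamma})=\sigma(\mathbb{F})(\lambda\wedge\gamma)$, using (F2) for $\mathbb{F}$, then $\widehat{\lambda}\wedge\widehat{\gamma}\leq\widehat{\lambda\wedge\gamma}$ with monotonicity. For (F3): $\gamma\swarrow\lambda\leq\widehat{\gamma}\swarrow\widehat{\lambda}\leq\mathbb{F}(\widehat{\gamma})\ldd\mathbb{F}(\widehat{\lambda})=\sigma(\mathbb{F})(\gamma)\ldd\sigma(\mathbb{F})(\lambda)$, where the first inequality is the contraction observation above and the second is (F3) for $\mathbb{F}$ applied to $\widehat{\gamma},\widehat{\lambda}\in\sQ^{\FQ(X)}$.

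I expect the only genuinely non-mechanical point to be the monotonicity of a $\sQ$-filter, which is needed silently several times: from (F3), taking $\lambda\leq\gamma$ one gets $k\leq\gamma\swarrow\lambda\leq F(\gamma)\ldd F(\lambda)$, hence $F(\lambda)\leq F(\gamma)$ by Proposition \ref{prop of impli}(i). One should state this little fact once at the outset. A secondary bookkeeping point is to be careful that $\widehat{k_X}$ and $\widehat{r_X}$ are \emph{not} literally the constant maps on $\FQ(X)$ but only comparable to them in the right direction — this is exactly what makes (F1) and (F4) survive, and it is why the argument needs monotonicity of $\mathbb{F}$ rather than just evaluating $\mathbb{F}$ at a constant. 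Beyond that, everything is a straightforward chain of inequalities, so the write-up should be short.
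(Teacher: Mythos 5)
Your proof is correct and follows essentially the same route as the paper: the verification of \ref{FF3} via the chain $\gamma\swarrow\lam\leq\widehat{\gamma}\swarrow\widehat{\lam}\leq\mathbb{F}(\widehat{\gamma})\ldd\mathbb{F}(\widehat{\lam})$ is exactly the paper's calculation, and the paper simply declares \ref{FF1}, \ref{FF2}, \ref{FF4} ``clear'' where you spell them out (correctly, including the point that $\widehat{k_X}$ and $\widehat{r_X}$ are only comparable to, not equal to, the constant maps, so monotonicity of $\mathbb{F}$ is needed).
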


\begin{proof} That $\sigma(\mathbb{F})$ satisfies \ref{FF1}, \ref{FF2}  and \ref{FF4} is clear,
it remains to check that it satisfies \ref{FF3}. We calculate: for all $\lam,\gamma\in\sQ^X$, \begin{align*}\gamma\swarrow\lam &\leq\bw_{F\in\FQ(X)} F(\gamma)\ldd F(\lam) \\ &=\widehat{\gamma}\swarrow\widehat{\lam} \\ &\leq \mathbb{F}(\widehat{\gamma})\ldd\mathbb{F}(\widehat{\lam}) 
\\ &= \sigma(\mathbb{F})(\gamma)\ldd\sigma(\mathbb{F})(\lam), \end{align*} which completes the proof.
\end{proof}
The $\sQ$-filter $\sigma(\mathbb{F})$ is called the \emph{diagonal $\sQ$-filter}, or the \emph{Kowalsky sum},  of $\mathbb{F}$.
The diagonal $\sQ$-filter is closely related to the multiplication of the monad  $(\exp_\sQ^{-2},\mu,\eta)$. Let $i$ be the inclusion transformation of the   functor $\FQ$ in $\exp_\sQ^{-2}$. Then for   each $\mathbb{F}\in \FQ^2(X)$,  \[\sigma(\mathbb{F})= \mu_X\circ(i*i)_X(\mathbb{F}),  \] where  $i*i$ stands for the horizontal composite of $i$ with itself. This shows that the functor $\FQ$ is closed under the multiplication  $\mu$, hence     $\mu$  induces a natural transformation from $\FQ^2$ to $\FQ$, which is also denoted by $\mu$.

For each $x$ of $X$,   \[\eta_X(x)\colon\sQ^X\to\sQ,  \quad \eta_X(x)(\lam)=\lam(x)\]  is a $\sQ$-filter, hence the unit  of the monad $(\exp_\sQ^{-2},\mu,\eta)$ factors through  $\FQ$. This means that $\eta$ can be viewed as a natural transformation  from the identity functor to $\FQ$.

Since $\eta$ factors through $\FQ$ and $\FQ$ is closed under the multiplication $\mu$,   the triple \[(\FQ,\mu,\eta)\] is a monad in the category of sets, a submonad  of  $(\exp_\sQ^{-2},\mu,\eta)$.

Now we lift the monad $(\FQ,\mu,\eta)$ to a monad in $\setQ$. For each object $(X,\alpha)$ of $\setQ$, let \[\mathfrak{F}(X,\alpha)= (\FQ(X),(\alpha_\dag^\uparrow)^\uparrow),\] where for each $\sQ$-filter $F$ on $X$, \[(\alpha_\dag^\uparrow)^\uparrow(F)= \bw_{\gamma\in\sQ^X}\Big(F(\gamma)\ldd\bw_{x\in X}\alpha(x)\rdd\gamma(x)\Big).\]
Then we obtain a functor $$\mathfrak{F}\colon\setQ\to\setQ,$$  which is  a subfunctor of the  functor $\mathfrak{P}$.

\begin{prop} The triple $(\mathfrak{F},\mu,\eta)$ is a submonad of the monad $(\mathfrak{P},\mu,\eta)$ in $\setQ$, and it is a lifting of the $\sQ$-filter monad $(\FQ,\mu,\eta)$. \end{prop}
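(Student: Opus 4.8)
The plan is to verify the three things that make $(\mathfrak{F},\mu,\eta)$ a submonad of $(\mathfrak{P},\mu,\eta)$ and a lifting of $(\FQ,\mu,\eta)$, using Proposition \ref{lifting of monad} to avoid checking the monad laws by hand. First I would record that $\mathfrak{F}$ is a lifting of $\FQ$: this amounts to observing that the forgetful functor $U$ sends $\mathfrak{F}(X,\alpha)=(\FQ(X),(\alpha_\dag^\uparrow)^\uparrow)$ to $\FQ(X)$ and sends $\mathfrak{F}f$ to $\FQ f$, which is immediate from the definitions since $\mathfrak{F}f$ acts by $F\mapsto f(F)$, exactly as $\FQ f$ does. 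Then I would note that the unit $\eta_{(X,\alpha)}$ of $\mathfrak{P}$ lands inside $\FQ(X)$ — indeed $\eta_X(x)$ is the principal $\sQ$-filter at $x$, as already remarked in the text — and that it satisfies the Goguen condition $\alpha\le (\alpha_\dag^\uparrow)^\uparrow\circ\eta_{(X,\alpha)}$, which follows from the corresponding fact for $\mathfrak{P}$ since $\mathfrak{F}(X,\alpha)$ is a (full) fuzzy subobject of $\mathfrak{P}(X,\alpha)$ in the sense that $(\alpha_\dag^\uparrow)^\uparrow$ on $\FQ(X)$ is the restriction of the corresponding map on $\sQ^{\sQ^X}$. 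Similarly I would check that $\mu_{(X,\alpha)}$, already known to be a Goguen map on $\mathfrak{P}^2(X,\alpha)$, restricts to a Goguen map $\mathfrak{F}^2(X,\alpha)\to\mathfrak{F}(X,\alpha)$: the set-level fact that $\FQ$ is closed under $\mu$ (equivalently, that $\sigma(\mathbb{F})$ is a $\sQ$-filter, Lemma \ref{closed under multiplication}) guarantees the codomain is correct, and the Goguen inequality is inherited by restriction.

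More precisely, the key steps in order are: (1) verify $U\circ\mathfrak{F}=\FQ\circ U$ on objects and morphisms, so $\mathfrak{F}$ is a lifting of $\FQ$; (2) verify $\eta_{(X,\alpha)}$ factors through $\mathfrak{F}(X,\alpha)$ as a Goguen map, using that $\eta_X(x)\in\FQ(X)$ and that the Goguen condition for $\eta$ into $\mathfrak{P}(X,\alpha)$ restricts; (3) verify $\mu$ restricts to a natural transformation $\mathfrak{F}^2\to\mathfrak{F}$ in $\setQ$, invoking Lemma \ref{closed under multiplication} for the set-level closure and restriction for the Goguen condition and for naturality; (4) invoke Proposition \ref{lifting of monad} with $\mathcal{A}=\setQ$, $T=\FQ$, $\mathscr{T}=\mathfrak{F}$ to conclude $(\mathfrak{F},\mu,\eta)$ is a monad in $\setQ$ and a lifting of $(\FQ,\mu,\eta)$; (5) observe that the inclusion $\mathfrak{F}\hookrightarrow\mathfrak{P}$ is a monomorphism of functors compatible with $\eta$ and $\mu$ — this is automatic because it is a lifting of the inclusion $\FQ\hookrightarrow\exp_\sQ^{-2}$, which is already known to be a submonad inclusion, and monad-map equations that hold after applying the faithful $U$ hold in $\setQ$ — so $(\mathfrak{F},\mu,\eta)$ is a submonad of $(\mathfrak{P},\mu,\eta)$.

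The only genuine content, and the step I expect to take the most care, is checking that the formula $(\alpha_\dag^\uparrow)^\uparrow(F)=\bw_{\gamma\in\sQ^X}\bigl(F(\gamma)\ldd\bw_{x\in X}\alpha(x)\rdd\gamma(x)\bigr)$ on $\FQ(X)$ is genuinely the restriction of the corresponding structure map $(\alpha_\dag^{\uparrow})^\uparrow(\Lambda)=\bw_{\gamma\in\sQ^X}\bigl(\Lambda(\gamma)\ldd(\alpha\searrow\gamma)\bigr)$ on $\sQ^{\sQ^X}$ — it is, since $\alpha\searrow\gamma=\bw_{x}\alpha(x)\rdd\gamma(x)$ by definition — so that everything about $\mathfrak{F}$ really is inherited by restriction from $\mathfrak{P}$, together with confirming that $\mathfrak{F}$ is indeed a \emph{sub}functor of $\mathfrak{P}$ (i.e.\ the inclusions $\FQ(X)\hookrightarrow\sQ^{\sQ^X}$ are natural, which is clear from $f(F)(\gamma)=F(\gamma\circ f)=\exp_\sQ^{-2}f(F)(\gamma)$). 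Once these identifications are in place, all four monad-related diagrams for $\mathfrak{F}$ are restrictions of the already-verified diagrams for $\mathfrak{P}$ (equivalently, are liftings of the already-verified diagrams for $\FQ$), and no further computation is needed; the bulk of the work is bookkeeping rather than estimation.
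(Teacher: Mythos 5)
Your proposal is correct and follows essentially the route the paper intends: the paper states this proposition without a displayed proof, having already assembled exactly the ingredients you list in the preceding paragraphs (Lemma~\ref{closed under multiplication} for closure of $\FQ$ under $\mu$, the observation that $\eta_X(x)$ is a $\sQ$-filter, the set-level submonad $(\FQ,\mu,\eta)\hookrightarrow(\exp_\sQ^{-2},\mu,\eta)$, and the definition of $(\alpha_\dag^\uparrow)^\uparrow$ on $\FQ(X)$ as the restriction of the structure map of $\mathfrak{P}(X,\alpha)$), so that the conclusion follows by Proposition~\ref{lifting of monad} and faithfulness of $U$, exactly as you argue. Your added care about the Goguen conditions being inherited by restriction and about naturality of the inclusions is the right bookkeeping and fills in what the paper leaves implicit.
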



Besides the covariant $\sQ$-powerset monad, the $\sQ$-filter monad, and the double contravariant $\sQ$-powerset monad, some other monads in $\Set$  can also be lifted to $\setQ$. For instance, the   powerset monad in Example \ref{powerset monad}  and the  list monad  (see e.g. \cite[page 156]{Riehl}). For each object $(X,\alpha)$ of $\setQ$ and each subset $A\subseteq X$, let $$\alpha_P(A)=\bw_{a\in A}\alpha(a).$$  Then the assignment $(X,\alpha)\mapsto(2^X,\alpha_P)$ gives rise to a functor on $\setQ$, which leads to a lifting of the  powerset monad to $\setQ$. The  list monad  in $\Set$  can be lifted to $\setQ$ in a similar way.

It should be noted that there exist monads in $\setQ$ that are not lifting of any monad in the category of sets, the monad $\mathbb{P}^2$   constructed in Demirci \cite[Section 4]{Demirci21} provides such an example.


\begin{thebibliography}{[99]}\setlength{\itemsep}{-1pt}

\bibitem{AHS}J. Ad\'{a}mek, H. Herrlich, G.E. Strecker, \emph{Abstract and Concrete Categories}, Wiley,   1990.


\bibitem{AM1975} M.A. Arbib, E.G. Manes, Fuzzy machines in a category, Bulletin of  Australian Mathematical Soceity 13 (1975) 169-210.



\bibitem{Demirci21}M. Demirci, Many valued topologies on L-sets, Fuzzy Sets and Systems 437 (2022) 97-113.
\bibitem{EG92} P. Eklund, W. G\"{a}hler, Fuzzy filter functions and convergence, in:  \emph{Applications of Category Theory to Fuzzy Subsets}, Kluwer, Dordrecht, 1992, pp.109-136.

\bibitem{EGHK}P. Eklund, J. Guti\'{e}rrez Garc\'{i}a, U. H\"{o}hle, J. Kortelainen, \emph{Semigroups in Complete Lattices. Quantales, Modules and Related Topics}, Springer, 2018.

\bibitem{EKS12} P. Eklund, J. Kortelainen, L.N. Stout, Adding fuzziness to terms and power objects using a monadic approach, Fuzzy Sets and Systems 192 (2012) 104-122.


\bibitem{Galatos2007}N. Galatos, P. Jipsen, T. Kowalski, H. Ono, \emph{Residuated Lattices, An Algebraic Glimpse at Substructural Logics}, Elsevier, 2007.


\bibitem{Goguen67} J.A. Goguen, $L$-fuzzy sets, Journal of Mathematical Analysis and Applications 18 (1967) 145-174.

\bibitem{Goguen69} J.A. Goguen,    The logic of inexact concepts, Synthese  19 (1969) 325-373.

\bibitem{Goguen69b} J.A. Goguen, Categories of $V$-sets, Bulletin of the American Mathematical Society 75 (1969) 622-624.

\bibitem{Goguen74} J.A. Goguen,  Concept representation in natural and artificial languages: Axioms, extensions and applications for fuzzy sets, International Journal of Man-Machine Studies 6 (1974) 513-561.


\bibitem{Hoehle2001}U. H\"{o}hle, \emph{Many Valued Topology and Its Applications}, Kluwer Academic Publishers,   2001.


\bibitem{HS1991}U. H\"{o}hle, L.N. Stout, Foundations of fuzzy sets, Fuzzy Sets and Systems 40 (1991) 257-296.

\bibitem{Monoidal top} D. Hofmann, G. J. Seal,   W. Tholen  (eds.), {\em Monoidal Topology: A Categorical Approach to Order, Metric, and Topology}, 
    Cambridge University Press,  2014.

\bibitem{Joyal-Tierney}A. Joyal,  M. Tierney,   \emph{An Extension of the Galois Theory of Grothendieck}, Memoirs of the American Mathematical Society,   No. 309, 1984. 

\bibitem{LZZ2021} H. Lai, D. Zhang, G. Zhang, The saturated prefilter monad, Topology and its Applications 301 (2021) 107525.



\bibitem{MacLane1998} S. Mac\thinspace Lane, \emph{Categories for the Working Mathematician},  Second Edition,  Graduate Texts in Mathematics, Volume 5, Springer,   1998.


\bibitem{Machner}J. Machner, $T$-algebras of the monad $L$-Fuzz, Czechoslovak Mathematical Journal  35 (1985)  515-528.

\bibitem{Manes76} E.G. Manes, \emph{Algebraic Theories}, Graduate Texts in Mathematics, Volume 26, Springer, 1976.

\bibitem{Manes1982} E.G. Manes, A class of fuzzy theories, Journal of Mathematical Analysis and Applications 85 (1982) 409-451.

\bibitem{Manes2003} E.G. Manes, Monads of sets, in: \emph{Handbook of Algebra, Volume 3}, Elsevier, 2003, pp. 67-153.

\bibitem{Manes2010}E.G. Manes, Monads in topology, Topology and its Applications 157 (2010) 961-989.


\bibitem{PT1989} M.C. Pedicchio, W. Tholen, Multiplicative structures over sup-lattices, Archivum Mathematicum    25 (1989)  107-114.


\bibitem{Pultr76a} A. Pultr, Fuzzy mappings and fuzzy sets, Commentationes Mathematicae Universitatis Carolinae 17 (1976) 441-459.

\bibitem{Pultr76b} A. Pultr, On categories over the closed categories of fuzzy sets, in: Abstracta. 4th Winter School on Abstract Analysis, Praha, Czechoslovak Academy of Sciences, 1976, pp. 47-63.

\bibitem{Riehl}E. Riehl, \emph{Category Theory in Context}, Dover Publications, 2016.

\bibitem{Rosenthal1990}   K.I. Rosenthal, \emph{Quantales and Their Applications}, Longman,   1990.


\bibitem{Stout92}L.N. Stout, The logic of unbalanced subobjects in a category with two closed structures, in: 
    \emph{Applications of Category Theory to Fuzzy Subsets}, Kluwer Academic Publishers, 1992, pp.73-106.

\bibitem{Stubbe2005} I. Stubbe, Categorical structures enriched in a quantaloid: categories, distributors and functors, Theory and Applications of Categories 14 (2005) 1-45.

\bibitem{Stubbe2006} I. Stubbe,   Categorical structures enriched in a quantaloid: tensored and   cotensored categories, Theory and Applications of Categories 16 (2006) 283-306.


\end{thebibliography}
\end{document}